\newtheorem{theorem}{Theorem}[section]
\newtheorem{definition}{Definition}[section] 
\newtheorem{remark}{Remark}[section] 
\newtheorem{lemma}{Lemma}[section]
\newcommand{\R}{\mathbb{R}}
\newcommand{\C}{\mathbb{C}}
\newcommand{\grad}{\nabla}
\newcommand{\weakc}{\rightharpoonup}
\newcommand{\eps}{\varepsilon}
\begin{document}
\setlength{\parskip}{1mm}
\setlength{\oddsidemargin}{0.1in}
\setlength{\evensidemargin}{0.1in}
\lhead{}
\rhead{}

\begin{center}
{\bf HOMOGENIZATION OF THE TRANSMISSION EIGENVALUE PROBLEM FOR PERIODIC MEDIA AND APPLICATION TO THE INVERSE PROBLEM}
\end{center}
\vspace{0.05in}

\begin{center}
Fioralba Cakoni\\
Department of Mathematical Sciences\\
University of Delaware Newark\\
Delaware 19716-2553, USA \\ 
E-mail address: cakoni@math.udel.edu\\
\vspace{0.3in}

Houssem Haddar \\
INRIA Saclay Ile de France/CMAP Ecole Polytechnique \\
Route de Saclay, 91128 Palaiseau Cedex, France\\
E-mail address: haddar@cmap.polytechnique.fr\\
\vspace{0.3in}

Isaac Harris \\
Department of Mathematical Sciences\\
University of Delaware Newark\\
Delaware 19716-2553, USA \\
E-mail address: iharris@udel.edu
\end{center}

\begin{abstract} We consider the interior transmission  problem associated with the scattering by an inhomogeneous (possibly anisotropic) highly oscillating periodic media. We show that,  under appropriate assumptions,  the solution of the interior transmission problem converges to the solution of a homogenized problem as the period goes to zero. Furthermore,  we prove that the associated real transmission eigenvalues  converge to  transmission eigenvalues of the homogenized problem. Finally we show how to use the first transmission eigenvalue of the period media, which is measurable from  the scattering data, to obtain information about constant effective material properties of the periodic media. The convergence results presented here are not optimal. Such results with rate of convergence involve the analysis of the boundary correction and will be subject of a forthcoming paper.
 \end{abstract}

\section{Introduction}
We consider the  transmission eigenvalue problem associated with the scattering by inhomogeneuos (possibly anisotropic) highly oscillating periodic media in the frequency domain. The governing equations possess rapidly oscillating periodic coefficients  which typically model  the wave propagation through composite  materials with  fine microstructure. Such composite materials are at the foundation of many contemporary engineering designs and are used to produce materials with special properties by combining in a particular structure (usually in periodic patterns) different materials.  In practice, it is desirable to understand these special properties, in particular macrostructure behavior of the composite materials which mathematically is achievable by using homogenization approach \cite{allair}, \cite{BLP}.  Our concern here is with the study of the corresponding transmission eigenvalues, in particular their behavior as the period in the medium approaches zero. To this end, it is essential to prove strong $H^1(D)$-convergence  of the resolvent corresponding to the transmission eigenvalue problem, or as known as the  solution of the interior transmission problem.  Transmission eigenvalues associated with the scattering problem for an inhomogeneous media are closely related to the so-called non scattering  frequencies  \cite{corner}, \cite{CC-book}, \cite{CakHad2}.  Such eigenvalues can be determined from  scattering data \cite{blow}, \cite{kirsch} and provide information about material properties of the scattering media \cite{cakginhad}, and hence can be used to estimate the refractive index of the media. In particular, in the current work we use the first transmission eigenvalue to estimate the effective material properties of the periodic media.

More precisely, let $D \subset \R^d$ be a bounded simply connected open set with piecewise smooth  boundary $\partial D$ representing the support of the inhomogeneous periodic media. Let $\epsilon>0$ be the length of the period, which is assumed to be very small in comparison to the size of $D$ and let $Y=(0,\,1)^d$ be the rescaled unit periodic cell. We assume that the constitutive material properties in the media are given by  a  positive definite symmetric matrix valued function $A_\epsilon:=A(x/\epsilon)\in  L^{\infty} \left(D, \R^{d \times d} \right)$  and a positive function $n_\epsilon:=n(x/\epsilon) \in L^{\infty} \left(D \right)$. Furthermore, assume that both $A(y)$ and $n(y)$ are periodic in $y=x/\epsilon$ with period $Y$ (here $x\in D$ is refer to as the slow variable where $y=x/\epsilon\in {\mathbb R}^d$ is referred to as the fast variable). We remark that our  convergence analysis is also  valid in the absorbing case, i.e. for complex valued $A$ and $n$, but since the real eigenvalues (which are the measurable ones) exist only for real valued material properties,  we limit ourselves to this case. Let us introduce the following notations:
\begin{eqnarray}
\inf_{y \in Y} \inf_{|\xi|=1} \overline{\xi} \cdot A(y) \xi =A_{min}>0 \, \, &\textrm{ and }& \, \, \sup_{y \in Y} \sup_{|\xi|=1} \overline{\xi} \cdot A(y) \xi =A_{max}< \infty\label{c0} \\
\inf_{y \in Y} n(y)=n_{min}>0 \, \,&\textrm{ and }&  \, \, \sup_{y \in Y} n(y)=n_{max} < \infty. \label{c1}
\end{eqnarray}
The interior transmission eigenvalue problem for the anisotropic media ($d=2$ in electromagnetic scattering and $d=3$ in acoustic scattering) reads: find $(w_\eps,v_\eps)$ satisfying:
\begin{eqnarray}
\grad \cdot A_\epsilon\grad w_\eps +k_\eps^2 n_\epsilon w_\eps=0 \,  &\textrm{ in }& \,  D\label{def1} \\
 \Delta v_\eps + k_\eps^2 v_\eps=0  &\textrm{ in }& \,  D \\
 w_\eps=v_\eps  &\textrm{ on }& \partial D\\
 \frac{\partial w_\eps}{\partial \nu_{A_\eps}}=\frac{\partial v_\eps}{\partial \nu} &\textrm{ on }& \partial D \label{def2}
\end{eqnarray}
where $\frac{\partial w}{\partial \nu_{A}} =\nu \cdot A\nabla w$. Note that the spaces for the solution  $(w_\eps,v_\eps)$  will become precise later  since they depend on whether $A=I$ or $A\neq I$.
\begin{figure}[ht!]
\centering
\includegraphics[scale=0.44]{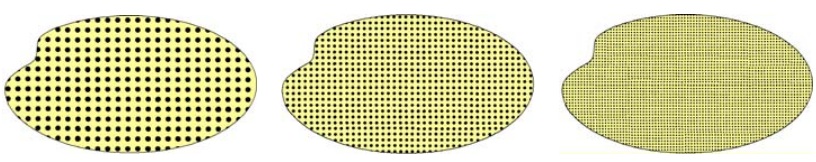}\\
\caption{A periodic domain for three different values of $\epsilon$.}
\label{areatev}
\end{figure}

\begin{definition}
The  values $k_\eps \in \C$ for which (\ref{def1})-(\ref{def2}) has a nontrivial solution are called transmission eigenvalues. The corresponding nonzero solutions $(w_\eps,v_\eps)$  are referred to eigenfunctions. 
\end{definition}
It is known that, provided that $A-I$ or/and $n-1$  do not change sign in $D$ and are bounded away from zero, the real transmission eigenvalues exist \cite{cakginhad}, \cite{cakonikirsch}, \cite{harris}. However the transmission eigenvalue problem is non-selfajoint and this causes complications in the analysis. In this study we are interested in the behavior of eigenvalues $k_\eps$ and eigenfunctions $(w_\eps,v_\eps)$ in limiting case as $\eps \rightarrow 0$. In particular we will be interested in the limit of the {\it real transmission eigenvalues} since they  have been proven to exists and can be determined from scattering data.  
\subsection{Formal Asymptotic Expansion}  We are interested in developing the asymptotic theory of (\ref{def1})-(\ref{def2}) as the period size $\epsilon\to 0$. To this end we need to define the space 
$$H^1_{\#}(Y):=\{ u \in H^1(Y) \, | \,  u(y) \textrm{ is } Y \textrm{-periodic} \}$$ 
and consider the subspace of $Y$-periodic $H^1$-functions of mean zero, i.e. 
$$\widehat{H}^1_{\#}(Y):=\left\{ u \in H^1_{\#}(Y) \, | \, \int_Y u(y) \, dy=0 \right\}.$$
One expects (as our convergence analysis will confirm) that the homogenized or limiting transmission eigenvalue problem will be
\begin{eqnarray}
\grad_x \cdot A_h \grad_x w_0 +k^2 n_h w_0=0 \,  &\textrm{ in }& \,  D\label{h1}\\
 \Delta_x v_0 + k^2 v_0=0  &\textrm{ in }& \,  D \\
w_0=v_0   &\textrm{ on }& \partial D\\
 \frac{\partial w_0}{\partial \nu_{A_h}}=\frac{\partial v_0}{\partial \nu}     &\textrm{ on }& \partial D \label{h2}.
\end{eqnarray}
where 
\begin{equation}\label{hr1}
A_h= \frac{1}{|Y|} \int_Y \left(A(y)-A(y) \grad_y \vec{\psi}(y) \right) \, dy\qquad \mbox{and}\qquad  n_h=\frac{1}{|Y|} \int_Y n(y) \, dy,
\end{equation}
The so-called cell function  $\psi_i(y) \in \widehat{H}^1_{\#}(Y)$ is the unique solution to
\begin{equation}\label{psi}
\grad_y \cdot A \grad_y \psi_i=\grad_y \cdot A e_i \, \textrm{ in } \,  Y,
\end{equation}
where $e_i$ is the $i$-th standard basis vector in $\R^d$. 
We recall that it is well known that the homogenized (otherwise known as effective) anisotropic constitutive parameter of the periodic medium $A_h$ satisfies the following estimates \cite{allair}
\begin{equation}\label{estA}
\hspace*{0.5cm} \left(\frac{1}{|Y|}\int\limits_YA^{-1}(y)dy\right)^{-1}\hspace*{-0.2cm}\xi\cdot \overline{\xi}\leq A_h\xi\cdot\overline{\xi}\leq \left(\frac{1}{|Y|}\int\limits_YA(y)dy\right)\xi\cdot \overline{\xi}\quad  \xi\in {\mathbb C}^d
\end{equation}
hence (\ref{c0}) and (\ref{c1}) are also satisfied for $A_h$ and $n_h$. 

The question now  is whether the eigenvalues $k_\epsilon$ and corresponding eigenfunctions $v_\epsilon, w_\epsilon$ of (\ref{def1})-(\ref{def2}) converge to eigenvalues and eigenfuctions of (\ref{h1})-(\ref{h2}).  For the Dirichlet and Neumann  eigenvalue problem for periodic structures the question of convergence is studied in  details. In particular for these problems,  the convergence is proven in  \cite{BLP}, \cite{kesavan1} and  \cite{kesavan2} and  the rate of convergence  with explicit first order correction involving the  boundary layer is studied in  \cite{kenig}, \cite{kenig2}, \cite{shari1}, \cite{shari2} and \cite{santosa}.
Given the peculiarities of the transmission eigenvalue problem such as non-selfadjointness and the lack of ellipticity, the above approaches cannot be applied in a straightforward manner. Furthermore the transmission eigenvalue problem exhibits different properties  in the case when $A\neq I$ or $A=I$ , hence each of these cases need to be studied separately \cite{CakHad2}. We remark that the existence of an infinite set of transmission eigenvalues in general settings  is proven in \cite{lak1}, \cite{lak2} and \cite{robbiano}, where the existence of an infinite set of real transmission eigenvalues along with monotonicity properties  are proven in \cite{cakginhad} and \cite{cakonikirsch}.  In the next section we justify the formal asymptotic  for the resolvent corresponding to the transmission eigenvalue problem using the two scale convergent approach developed in \cite{allaire1}. This is followed by the proof of convergence results for a subset of real transmission eigenvalues in Section 3. The last section is dedicated to some preliminary numerical examples where we investigate convergence properties of the first transmission eigenvalue and demonstrate the  feasibility of using the first real transmission eigenvalue to determine the effective material properties  $A_h$ and $n_h$.  

\section{Convergence Analysis}
%%%%%%%%%%%%%%%%%%% Analysis of Convergence
We start with studying the convergence of the resolvent of the transmission eigenvalue problem, i.e. of the solution to the interior transmission problem with source terms. The approach to study the interior transmission problem depends on the fact whether 
$A(y)\neq I$  for all $y\in Y$ or $A(y) \equiv I$. 
\subsection{The case of $A_\eps \neq I$} \label{sec3.1} We assume that  $A_{min}>1$ or $A_{max}<1$  in addition to (\ref{c0}) and (\ref{c1}). For $f_\epsilon$ and $g_\epsilon$ in $L^2(D)$ strongly convergent to $f$ and $g$, respectively,  as $\epsilon \to 0$ we consider the interior transmission problem  of finding $(w_\eps , v_\eps )\in H^1(D) \times H^1(D)$ such that 
\begin{eqnarray}
\grad \cdot A \left(x/ \eps \right) \grad w_\eps +k^2 n\left(x/ \eps \right) w_\eps=f_\epsilon \,  &\textrm{ in }& \,  D \label{homequ1} \\
 \Delta v_\eps + k^2 v_\eps=g_\epsilon  &\textrm{ in }& \,  D \label{homequ2} \\
w_\eps=v_\eps  &\textrm{ on }& \partial D\\
  \frac{\partial w_\eps}{\partial \nu_{A_\eps}}=\frac{\partial v_\eps}{\partial \nu} &\textrm{ on }& \partial D.\label{homequ3}
\end{eqnarray}
The following result is known (see  \cite{CCH01} and \cite{CH01wave} for the proof). 
\begin{lemma}
Assume that $A_{min}>1$ or $A_{max}<1$. Then the problem (\ref{homequ1})-(\ref{homequ3}) satisfies the Fredholm alternative. In particular it has a unique solution $(w_{\epsilon},v_{\epsilon})\in H^1(D)\times H^1(D)$ provided $k$ is not a transmission eigenvalue. 
\end{lemma}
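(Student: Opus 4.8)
The plan is to recast the interior transmission problem~(\ref{homequ1})--(\ref{homequ3}) as a variational problem on the product space $H^1(D)\times H^1(D)$, show that the associated sesquilinear form splits into a coercive part plus a compact perturbation, and then invoke the Fredholm alternative directly. The sign condition $A_{\min}>1$ or $A_{\max}<1$ is precisely what makes the principal part sign-definite once one subtracts the two equations appropriately, so this is where that hypothesis is used in an essential way.

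First I would introduce the difference $u_\eps=w_\eps-v_\eps$, which satisfies $u_\eps=0$ and $\partial u_\eps/\partial\nu=\partial w_\eps/\partial\nu_{A_\eps}-\partial v_\eps/\partial\nu$ with a jump relation on $\partial D$; equivalently, I would keep the pair $(w_\eps,v_\eps)$ but test~(\ref{homequ1}) against $\overline{w'}$ and~(\ref{homequ2}) against $\overline{v'}$ for test functions with $w'=v'$ on $\partial D$, integrate by parts, and use the Neumann transmission condition~(\ref{homequ3}) to cancel the boundary terms. This yields an identity of the form
\begin{equation*}
\int_D A_\eps\grad w_\eps\cdot\grad\overline{w'}\,dx-\int_D\grad v_\eps\cdot\grad\overline{v'}\,dx-k^2\int_D n_\eps w_\eps\overline{w'}\,dx+k^2\int_D v_\eps\overline{v'}\,dx=-\int_D f_\eps\overline{w'}\,dx+\int_D g_\eps\overline{v'}\,dx.
\end{equation*}
The difficulty is that the quadratic form $w\mapsto\int_D A_\eps|\grad w|^2-\int_D|\grad v|^2$ is not sign-definite on its own; the standard fix (following \cite{CCH01}) is to choose the right unknowns. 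When $A_{\max}<1$ one works with $(w_\eps-v_\eps,v_\eps)$ and isolates the form $\int_D A_\eps\grad(w-v)\cdot\grad\overline{(w'-v')}$ together with a term $\int_D(A_\eps-I)^{-1}(\cdots)$; when $A_{\min}>1$ one instead uses $(w_\eps,w_\eps-v_\eps)$ and the form built from $(I-A_\eps^{-1})^{-1}$. In either case the key algebraic point is that $A_\eps-I$ (or its inverse) is uniformly positive or negative definite, which produces a coercive principal part on the appropriate subspace of $H^1(D)\times H^1(D)$.

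Next I would separate the $k^2$-terms: the maps $(w,v)\mapsto\int_D n_\eps w\overline{w'}$ and $(w,v)\mapsto\int_D v\overline{v'}$ are bounded bilinear forms that factor through the compact embedding $H^1(D)\hookrightarrow L^2(D)$ (Rellich), hence define compact operators on the product space. Likewise the right-hand side $(f_\eps,g_\eps)\in L^2(D)\times L^2(D)$ defines a bounded antilinear functional via the same compact embedding, so by Riesz representation the whole problem reads $(I+K_\eps)U_\eps=F_\eps$ with $K_\eps$ compact. The Fredholm alternative then applies verbatim: either the homogeneous problem has only the trivial solution — and then~(\ref{homequ1})--(\ref{homequ3}) is uniquely solvable with continuous dependence on the data — or the null space is finite-dimensional. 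By Definition~1.1 the homogeneous problem has a nontrivial solution exactly when $k$ is a transmission eigenvalue, which gives the stated dichotomy.

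The main obstacle, and the only genuinely delicate point, is the correct choice of variational formulation that turns the indefinite principal part into a coercive one under the one-sided assumption on $A$; once that reformulation is in place (and this is exactly the content imported from \cite{CCH01} and \cite{CH01wave}), everything else — boundedness of the forms, compactness via Rellich, and the application of the Fredholm alternative — is routine. I would also remark that the coercivity constants obtained this way depend on $A_\eps$ only through $A_{\min}$, $A_{\max}$, $n_{\min}$, $n_{\max}$ and on $D$, not on $\eps$; this uniformity is not needed for the present lemma but will be the starting point for the $\eps\to0$ convergence analysis in the rest of the section.
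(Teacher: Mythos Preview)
Your proposal is correct and follows the classical route from \cite{CCH01} and \cite{CH01wave}, which is precisely what the paper cites for this lemma (the paper does not give its own proof of Lemma~2.1 and simply defers to those references). So in that sense you have reconstructed the intended argument.

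It is worth noting, however, that when the paper actually needs the Fredholm structure with $\eps$-uniform constants (in the proof of Theorem~\ref{imthm1}), it does \emph{not} use your change-of-variables reformulation with $(w-v,v)$ and the forms built from $(A_\eps-I)^{-1}$. Instead it uses the $\mathbb{T}$-coercivity method of \cite{BCH}: one keeps the original pair $(w_\eps,v_\eps)\in X(D)$ and the natural sesquilinear form $a_\eps$, but tests against $\mathbb{T}(w_\eps,v_\eps)$ where $\mathbb{T}(w,v)=(w,-v+2w)$ when $A_{\min}>1$ and $\mathbb{T}(w,v)=(w-2v,-v)$ when $A_{\max}<1$. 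A direct Young-inequality estimate then gives coercivity of $a_\eps(\cdot\,;\mathbb{T}\cdot)$ with a constant depending only on $A_{\min}$ or $A_{\max}$. The two approaches are equivalent in outcome, but the $\mathbb{T}$-coercivity route avoids inverting $A_\eps-I$ and keeps the compact remainder in the simple form of an $L^2$ lower-order term, which makes the $\eps$-uniformity you anticipate in your last paragraph completely transparent. Your remark that the coercivity constants depend only on $A_{\min},A_{\max},n_{\min},n_{\max}$ and $D$ is correct, and indeed this is exactly what the paper exploits next.
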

The following lemma is proven in \cite{BCH} and \cite{cakginhad}.
\begin{lemma} 
Assume that $A_{min}>1$ or $A_{max}<1$ and either $n\equiv 1$ or if $n\not{\!\!\equiv}1$ then $\int_Y(n(y)-1)dy\neq 0$. The set of transmission eigenvalues $k\in{\mathbb C}$ is at most discrete with $+\infty$ as the only accumulation point.
\end{lemma}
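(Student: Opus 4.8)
The plan is to recast the interior transmission problem as an operator equation that depends holomorphically on $k$ and then to invoke the analytic Fredholm theorem; this is the route followed in \cite{cakginhad}. First I would reformulate (\ref{def1})--(\ref{def2}) (equivalently (\ref{homequ1})--(\ref{homequ3}) with vanishing data) variationally for the pair $\mathbf{U}=(w,v)$ in the Hilbert space $\mathbb{H}:=\{(w,v)\in H^1(D)\times H^1(D):\ w-v\in H^1_0(D)\}$, obtained by testing the equation for $w$ against $\phi$ and the one for $v$ against $\psi$ with $(\phi,\psi)\in\mathbb{H}$, so that the conormal boundary terms cancel thanks to the transmission conditions. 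This produces an equation $\mathcal{L}(k)\mathbf{U}=0$ in $\mathbb{H}$ in which the sesquilinear form splits --- after a suitable choice of test functions (a $T$-coercivity argument) that uses precisely the hypothesis $A_{min}>1$ or $A_{max}<1$ to put a definite sign on the contribution of $A-I$ --- into a part that defines a $k$-independent, boundedly invertible operator $\mathcal{B}$ on $\mathbb{H}$, plus lower order terms. The latter involve only $L^2$-pairings of $w$ and $v$ carrying powers of $k$, so by the compactness of the embedding $H^1(D)\hookrightarrow L^2(D)$ they define an operator $\mathcal{K}(k):\mathbb{H}\to\mathbb{H}$ that is compact for each $k$ and, being polynomial in $k$, extends to an \emph{entire} operator-valued function on $\mathbb{C}$; thus $\mathcal{L}(k)=\mathcal{B}+\mathcal{K}(k)$. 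The Fredholm alternative established above is exactly the assertion that $\mathcal{L}(k)$ is Fredholm of index zero, and $k\in\mathbb{C}$ is a transmission eigenvalue precisely when $I+\mathcal{B}^{-1}\mathcal{K}(k)$ fails to be invertible.

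Next I would exhibit at least one $k_0\in\mathbb{C}$ that is not a transmission eigenvalue. Choosing $k_0$ purely imaginary with $|k_0|$ large, so that $k_0^2$ is a large negative real number, the zeroth order terms acquire a favourable sign and a direct Lax--Milgram estimate shows that the full form is coercive on $\mathbb{H}$; this is where the assumption on $n$ --- either $n\equiv1$, or $n\not\equiv1$ with $\int_Y(n(y)-1)\,dy\neq0$, which keeps the zeroth order form nondegenerate --- enters, together with the sign of $A-I$. Hence $\mathcal{L}(k_0)$ is boundedly invertible, so $k_0$ is not a transmission eigenvalue.

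Finally, I would apply the analytic Fredholm theorem to the entire family $k\mapsto I+\mathcal{B}^{-1}\mathcal{K}(k)$ of identity-plus-compact operators on the connected domain $\mathbb{C}$: since it is invertible at $k_0$, it is invertible for every $k\in\mathbb{C}$ outside a set $S$ that is discrete and has no accumulation point in $\mathbb{C}$. By construction $S$ is exactly the set of transmission eigenvalues, which is therefore at most discrete, with $\infty$ (in particular $+\infty$ for the real ones) as its only possible accumulation point.

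I expect the main obstacle to be the first step in the case $A\neq I$: because the transmission conditions force the normal derivative of $w-v$ to equal $\nu\cdot(I-A)\nabla w$ rather than zero, one cannot collapse the system to a single fourth order equation on $H^2_0(D)$ as when $A=I$, and one must instead build the coercive principal part directly on the coupled space $\mathbb{H}$ through the $T$-coercivity choice of test functions. Constructing that splitting and checking the coercivity at $k_0$ under the stated hypotheses is where essentially all the work lies; the holomorphy of $\mathcal{K}(k)$ and the invocation of the analytic Fredholm theorem are then routine.
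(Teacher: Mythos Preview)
The paper does not supply its own proof of this lemma; it simply cites \cite{BCH} and \cite{cakginhad}. Your outline---variational formulation on $X(D)$, $\mathbb{T}$-coercivity to split into an invertible principal part plus a compact analytic (in fact polynomial) perturbation in $k$, exhibition of one non-eigenvalue, and then the analytic Fredholm theorem---is exactly the argument carried out in those references, and the $\mathbb{T}$-coercivity splitting you describe is reproduced verbatim by the paper itself in the proof of Theorem~\ref{imthm1}.

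One small point: your placement of the hypothesis on $n$ is not quite right. Coercivity of the full form at a large purely imaginary $k_0$ follows already from $n_{min}>0$ together with the sign condition on $A-I$; the integral condition $\int_Y(n-1)\,dy\neq 0$ plays no role there. In the cited works that hypothesis enters elsewhere, essentially to exclude a degenerate situation in which the compact part fails to yield a non-eigenvalue, not to secure coercivity of the zeroth-order terms. This does not affect the overall structure of your argument, which is correct.
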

Note that (\ref{estA}) implies that $A_h-I$ is positive definite if $A_{min}>1$ and $I-A_h$  is positive definite if $A_{max}<1$.

To analyze (\ref{homequ1})-(\ref{homequ3}) we introduce the variational space 
$$X(D):=\{ (w,v): \; w,v \in H^1(D) \, | \,  w-v \in H^1_0(D)\}$$
equipped with $H^1(D)\times H^1(D)$ norm and assume that $k$  is not a transmission eigenvalue for all $\epsilon>0$ small enough. Let $(w_\eps,v_\eps) \in  X(D)$ be the solution of  \eqref{homequ1}-\eqref{homequ3} for $\eps \geq 0$ small enough  (for $\eps=0$ we take the interior transmission problem with the homogenized coefficients $A_h$ and $n_h$) and assume that $(w_\eps,v_\eps)$ is a bounded sequence in $X(D)$-norm with respect to $\epsilon>0$ (this assumption will  be discussed later in the paper). This solution satisfies the variational problem
\begin{equation}
\int\limits_D A_\eps \grad w_\eps \cdot \grad {\varphi}_1 -\grad v_\eps \cdot \grad {\varphi}_2 -k^2(n_\eps w_\eps {\varphi}_1- v_\eps {\varphi}_2 )\, dx =\int\limits_D g_\epsilon { \varphi}_2\, dx-\int\limits_D f_\epsilon{\varphi}_1 \, dx \label{pa1}
\end{equation}
for all $(\varphi_1,\varphi_2) \in X(D)$. Hence we have that there is a $(w,v)
\in X(D)$ such that a subsequence $(w_\eps,v_\eps) \weakc (w,v)$  weakly in
$X(D)$ (strongly in $L^2(D) \times L^2(D)$). We now show that $(w,v)$ solves
the homogenized interior transmission  problem.  We adopt the formal 
 two-scale convergence framework: we say that a sequence $\alpha_\eps$ of $L^2(D)$
two-scale converges to $\alpha \in L^2(D\times Y)$ if 
$$
\int_D \alpha_\eps \varphi(x) \phi(x/\eps) dx \to \frac{1}{|Y|} \int_D \int_Y \alpha(x,y)
\varphi(x) \phi(y) dy dx
$$
for all $\varphi \in L^2(D)$ and $\phi \in C_\#(Y)$ (the space of $Y$-periodic continuous functions).  From \cite[Proposition 1.14]{allaire1} there exists
$w_1$ and $v_1$ $\in L^2(D, H^1_\#(Y))$ such that (up to a subsequence),
$\nabla w_\eps$ and $\nabla v_\eps$ respectively two-scale converge to
$\nabla_x w (x)+ \nabla_y w_1(x,y)$ and $\nabla_x v (x)+ \nabla_y
v_1(x,y)$. Let $\theta_1$ and $\theta_2$ in $C^\infty_0(D)$, $\phi_1$ and
$\phi_2$ in $C^\infty_\#(Y)$ ($Y$-periodic $C^\infty$ functions) and $(\psi_1,\psi_2) \in X(D)$. Applying
\eqref{pa1} to  $(\varphi_1,\varphi_2) \in X(D)$ such that $\varphi_i (x) =
\psi_i(x) + \eps \theta_i(x) \phi_i(x/\eps)$,
$i=1,2$ then taking the two-scale limit implies
\begin{multline} 
\int\limits_D \int\limits_Y  A(y) (\grad w(x) + \grad_y w_1(x,y)) \cdot (\grad
{\psi}_1(x) + \theta_1(x) \grad \phi_1(y))  dydx
\\
- \int\limits_D \int\limits_Y  (\grad v(x) + \grad_y v_1(x,y)) \cdot (\grad
{\psi}_2(x) + \theta_2(x) \grad \phi_2(y))  dydx
\\
- k^2 \int\limits_D \int\limits_Y n(y) w(x) {\psi}_1(x) - v(x) \psi_2(x) dydx 
=|Y| \int\limits_D g(x)  { \psi}_2(x)  - f(x){\psi}_1(x) \, dx.\label{phomtscale1}
\end{multline}
Taking $\psi_1=\psi_2=0$ one easily deduces 
\begin{equation} \label{corrector1} 
w_1(x,y) = - \vec{\psi}(y) \cdot \grad w(x) + \overline{w}_1 (x) \mbox{ and }
v_1(x,y) = \overline{v}_1 (x).
\end{equation}
Then considering again \eqref{phomtscale1} with $\theta_1 = \theta_2 =0$ implies
that $(w,v) \in X(D)$ satisfies
\begin{equation}
\int\limits_D A_h \grad w \cdot \grad {\psi}_1 -\grad v_\eps \cdot \grad
{\psi}_2 -k^2(n_h w {\psi}_1- v {\psi}_2 )\, dx =\int\limits_D g { \psi}_2\,
dx-\int\limits_D f {\psi}_1 \, dx \label{homtev1}
\end{equation}
which is the variational formulation of the homogenized problem \eqref{h1}-\eqref{h2}.

The above analysis was based on the assumption that the sequence that solves \eqref{homequ1}-\eqref{homequ3} is bounded with respect to $\epsilon>0$. Now we wish to show that any sequence that solves \eqref{homequ1}-\eqref{homequ3} is indeed bounded independently of $\eps$. % which will guaranty that there is a weakly convergent subsequence whose limit solves \eqref{homtev1}-\eqref{homtev3}
\begin{theorem} \label{imthm1} Assume that either $A_{min}>1$ or $A_{max}<1$ and  that $k$ is not a transmission eigenvalue for $\eps \geq 0$ small enough.  Then for any $(w_\eps,v_\eps)$ solving  \eqref{homequ1}-\eqref{homequ3} there exists  $C>0$ independent of $(f_\eps,g_\eps)$ and $\eps$ such that
$$||w_\eps||_{H^1(D)} +||v_\eps||_{H^1(D)} \leq C \left(||f_\eps||_{L^2(D)} +||g_\eps||_{L^2(D)} \right).$$
\end{theorem}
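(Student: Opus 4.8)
The plan is to argue by contradiction, which converts the desired uniform resolvent bound into a compactness statement. If the estimate failed, then (allowing both the constant and the threshold defining ``$\epsilon$ small enough'' to vary) we could produce a sequence $\epsilon_n \to 0$, source terms $(f_n,g_n)\in L^2(D)\times L^2(D)$ with $\|f_n\|_{L^2(D)}+\|g_n\|_{L^2(D)}\to 0$, and solutions $(w_n,v_n)\in X(D)$ of \eqref{homequ1}-\eqref{homequ3} with coefficients $A_{\epsilon_n}, n_{\epsilon_n}$, normalized so that $\|w_n\|_{H^1(D)}+\|v_n\|_{H^1(D)}=1$. It then suffices to show that, along a subsequence, $\|w_n\|_{H^1(D)}+\|v_n\|_{H^1(D)}\to 0$, contradicting the normalization.

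Since $\{(w_n,v_n)\}$ is bounded in $X(D)$, after passing to a subsequence $(w_n,v_n)\weakc (w,v)$ weakly in $X(D)$, hence strongly in $L^2(D)\times L^2(D)$ by Rellich's theorem. This boundedness is precisely the hypothesis under which the two-scale convergence computation carried out above in \eqref{pa1}-\eqref{homtev1} is legitimate; running it with $f_n\to 0$ and $g_n\to 0$ shows that $(w,v)$ satisfies the variational formulation of the homogenized problem \eqref{h1}-\eqref{h2} with zero right-hand side. Since $k$ is assumed not to be a transmission eigenvalue at $\epsilon=0$, and since \eqref{estA} guarantees that $A_h$ inherits the condition $A_{min}>1$ (resp.\ $A_{max}<1$) so that Lemma~2.1 applies to the homogenized medium, the homogenized interior transmission problem is uniquely solvable; therefore $(w,v)=(0,0)$.

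It remains to improve this weak convergence to strong $H^1$-convergence, and here I would use the $T$-coercivity that underlies the Fredholmness quoted in Lemma~2.1 (see \cite{CCH01,CH01wave}). There is a bounded, $\epsilon$-independent isomorphism $T$ of $X(D)$ --- namely $(w,v)\mapsto(w,2w-v)$ when $A_{min}>1$, and an analogous choice when $A_{max}<1$ --- for which the principal part of \eqref{pa1} is coercive uniformly in $\epsilon$:
\[
\int_D A_{\epsilon_n}\grad w_n\cdot\grad w_n-\grad v_n\cdot\grad(2w_n-v_n)\,dx\ \geq\ c_1\big(\|\grad w_n\|_{L^2(D)}^2+\|\grad v_n\|_{L^2(D)}^2\big),
\]
with $c_1>0$ depending only on $A_{min}$ (resp.\ $A_{max}$), not on $\epsilon$ nor on the oscillation of $A(\cdot/\epsilon_n)$. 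Choosing $(\varphi_1,\varphi_2)=T(w_n,v_n)$ in \eqref{pa1} and transferring the zeroth-order ($k^2$) terms and the source to the right-hand side, the latter is controlled by $C\big(\|f_n\|_{L^2(D)}+\|g_n\|_{L^2(D)}\big)+C\big(\|w_n\|_{L^2(D)}^2+\|v_n\|_{L^2(D)}^2\big)$, where $C$ depends only on $k$, $n_{max}$ and $\|T\|$ --- the oscillating coefficient $n(\cdot/\epsilon_n)$ entering harmlessly because it is merely $L^\infty$-bounded. Both terms tend to $0$: the first by construction, the second because $(w_n,v_n)\to(0,0)$ in $L^2(D)\times L^2(D)$. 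Hence $\|\grad w_n\|_{L^2(D)}+\|\grad v_n\|_{L^2(D)}\to 0$, and combined with the $L^2$-convergence this yields $\|w_n\|_{H^1(D)}+\|v_n\|_{H^1(D)}\to 0$, the desired contradiction.

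I expect the genuinely delicate point to be not any single estimate but the bookkeeping of uniformity: one must check that the $T$-coercivity constant, the norm of $T$, and the Fredholm/uniqueness statements --- stated in the references for a fixed medium --- all carry over with $\epsilon$-independent constants, which they do because they only depend on $A_{min}$, $A_{max}$, $n_{max}$ and $D$. This is also exactly why the result is not quantitative: the argument leans on the compact embedding $H^1(D)\hookrightarrow L^2(D)$ to annihilate the zeroth-order contributions, so it provides no rate of convergence, consistent with the remark in the abstract.
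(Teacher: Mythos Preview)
Your proof is correct and relies on the same two ingredients as the paper: the $\epsilon$-independent $T$-coercivity of the principal part (with $\mathbb{T}(w,v)=(w,2w-v)$ when $A_{min}>1$, resp.\ $\mathbb{T}(w,v)=(w-2v,-v)$ when $A_{max}<1$), and a contradiction argument that feeds into the two-scale limit to produce a nontrivial eigenfunction at $\epsilon=0$.

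The organization differs in one respect worth noting. The paper first rewrites the problem in operator form $(\mathbb{I}+\mathbb{K}_\eps)(w_\eps,v_\eps)=(\alpha_\eps,\beta_\eps)$ with $\mathbb{K}_\eps=\mathbb{A}_\eps^{-1}\mathbb{B}_\eps$ compact and $\|\mathbb{A}_\eps^{-1}\|$ uniformly bounded, then normalizes the contradiction sequence in $L^2\times L^2$ and uses the operator identity to recover an $H^1$ bound; the contradiction is obtained at the level of the $L^2$ norm. You instead normalize directly in $H^1\times H^1$, obtain the weak (hence strong $L^2$) limit $(0,0)$, and use $T$-coercivity a second time---now as an energy identity---to upgrade this to strong $H^1$ convergence. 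Your route avoids the intermediate operator bookkeeping and is slightly more streamlined; the paper's route has the side benefit of making the Fredholm structure explicit, which it reuses elsewhere. Both are equivalent in strength and, as you correctly point out, non-quantitative for the same reason.
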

\begin{proof}
We will prove the Fredholm property following the $\mathbb{T}$-coercivity approach in \cite{BCH}.  To this end we recall the variational formulation  (\ref{pa1}) equivalent to \eqref{homequ1}-\eqref{homequ3}. Let us first assume that $A_{min}>1$, which means that $A_{\epsilon}-I$ is positive definite in $D$ uniformly with respect to $\epsilon>0$, and define the bounded sesquilinear forms in $X(D)\times X(D)$ 
\begin{eqnarray*}
a_\eps \big( (w_\eps,v_\eps) ; (\varphi_1,\varphi_2) \big)&\hspace*{-0.2cm}:=\hspace*{-0.2cm}&\int\limits_{D} A_\eps \grad w_\eps \cdot \grad \overline{\varphi}_1 + A_{min} w_\eps \overline{\varphi}_1 \, dx -\int\limits_{D}   \grad v_\eps \cdot \grad \overline{\varphi}_2 + v_\eps \overline{\varphi}_2 \, dx,\\
b_\eps \big( (w_\eps,v_\eps) ; (\varphi_1,\varphi_2) \big)&\hspace*{-0.2cm}:=\hspace*{-0.2cm}&-\int\limits_{D} (k^2 n_\eps+A_{min}) w_\eps \overline{\varphi}_1 \, -(k^2+1) v_\eps \overline{\varphi}_2 \, dx.
\end{eqnarray*}
Then  (\ref{pa1})  can be written as 
$$a_\eps \big( (w_\eps,v_\eps) ; (\varphi_1,\varphi_2) \big)+b_\eps \big( (w_\eps,v_\eps) ; (\varphi_1,\varphi_2) \big)=F_\eps(\varphi_1,\varphi_2)$$
where $F_\eps(\varphi_1,\varphi_2)$ is the bounded linear functional on $X(D)$ defined by the right hand side of (\ref{pa1}). Let us consider ${\mathbb A}_{\epsilon}:X(D)\to X(D)$ and ${\mathbb B}_{\epsilon}:X(D)\to X(D)$ the bounded linear operators defined from $a_\eps \big( (w_\eps,v_\eps) ; (\varphi_1,\varphi_2) \big)$ and $b_\eps \big( (w_\eps,v_\eps) ; (\varphi_1,\varphi_2) \big)$  by  means of  Riesz representation theorem. It is clear that ${\mathbb B}_{\epsilon}$ is compact. We next show that  ${\mathbb A}_{\epsilon}$ is invertible with bounded inverse uniformly with respect to $\epsilon> 0$. To this end we consider the isomorphism $\mathbb{T}(w,v)=(w, -v+2w): X(D) \mapsto X(D)$ (it is easy to check that $\mathbb{T}=\mathbb{T}^{-1}$) and show that $a_\eps \big( (w_\eps,v_\eps) ; \mathbb{T} (\varphi_1,\varphi_2) \big)$ is coercive in $ X(D)$. Note that the isomorphism ${\mathbb T}$ does not depend on $\epsilon$. Hence, we have that
\begin{eqnarray*}
\left| a_\eps \big( (w_\eps,v_\eps) ; \mathbb{T}(w_\eps,v_\eps) \big) \right| &\geq& \int\limits_{D} A_\eps \grad w_\eps \cdot \grad \overline{w}_\eps + A_{min} |w_\eps|^2 \, dx +\int\limits_{D}  | \grad v_\eps|^2 +|v_\eps|^2 \, dx\\
&-&2 \left| \,  \int\limits_{D}  \grad v_\eps \cdot \grad \overline{w}_\eps +  v_\eps \overline{w}_\eps \, dx \right|. 
\end{eqnarray*}
But we can estimate
$$\left| 2\int\limits_{D}  \grad v_\eps \cdot \grad \overline{w}_\eps +  v_\eps \overline{w}_\eps \, dx \right| \leq  \frac{1}{\delta}||w_\eps||^2_{H^1(D)} +\delta  ||v_\eps||^2_{H^1(D)} \quad \text{ for any } \, \, \, \delta>0.$$ 
Hence we obtain 
\begin{eqnarray*}
\left| a_\eps \big( (w_\eps,v_\eps) ; \mathbb{T}(w_\eps,v_\eps) \big) \right| &\geq& \left(A_{min}- \frac{1}{\delta} \right) ||w_\eps||^2_{H^1(D)} +(1-\delta) ||v_\eps||^2_{H^1(D)}.
\end{eqnarray*}
So for any $\delta \in \left( \frac{1}{A_{min}}, 1\right)$ we have that there is a constant $\alpha >0$ independent of $\eps$ such that
$$\left| a_\eps \big( (w_\eps,v_\eps) ; \mathbb{T}(w_\eps,v_\eps) \big) \right| \geq \alpha \left( ||w_\eps||^2_{H^1(D)} +||v_\eps||^2_{H^1(D)} \right).$$
Next we assume that $A_{max} <1$ which means that  $I-A_{\epsilon}$ is positive definite in $D$ uniformly with respect to $\epsilon>0$. Similarly  we define
\begin{eqnarray*}
a_\eps \big( (w_\eps,v_\eps) ; (\varphi_1,\varphi_2) \big)&\hspace*{-0.2cm}:=\hspace*{-0.2cm}&\int\limits_{D} A _\eps\grad w_\eps \cdot \grad \overline{\varphi_1} + A_{max} w_\eps \overline{\varphi_1} \, dx -\int\limits_{D}   \grad v_\eps \cdot \grad \overline{\varphi_2} + v_\eps \overline{\varphi_2} \, dx\\
b_\eps \big( (w_\eps,v_\eps) ; (\varphi_1,\varphi_2) \big)&\hspace*{-0.2cm}:=\hspace*{-0.2cm}&-\int\limits_{D} (k^2 n_\eps+A_{max}) w_\eps \overline{\varphi_1} \, -(k^2+1) v_\eps \overline{\varphi_2} \, dx
\end{eqnarray*}
and the corresponding bounded linear operator ${\mathbb A}_{\epsilon}:X(D)\to X(D)$ and ${\mathbb B}_{\epsilon}:X(D)\to X(D)$. To show that ${\mathbb A}_{\epsilon}$  is invertible we now consider the isomorphism $\mathbb{T}(w,v)=(w-2v, -v): X(D) \mapsto X(D)$ (again it is easy to check that $\mathbb{T}=\mathbb{T}^{-1}$). We then have that 
\begin{eqnarray*}
\left| a_\eps \big( (w_\eps,v_\eps) ; \mathbb{T}(w,v) \big) \right| &\geq& \int\limits_{D} A_\eps \grad w_\eps \cdot \grad \overline{w}_\eps + A_{max} |w_\eps|^2 \, dx +\int\limits_{D}  | \grad v_\eps|^2 +|v_\eps|^2 \, dx\\
&-&2 \left| \,  \int\limits_{D}  A_\eps \grad w_\eps \cdot \grad \overline{v}_\eps +  A_{max} w_\eps \overline{v}_\eps \, dx \right|.
\end{eqnarray*}
Using that $A_\eps$ is symmetric positive definite we have that for any $\delta>0$: 
$$\left| 2  \int\limits_{D} A_\eps \grad w_\eps \cdot \grad \overline{v}_\eps \, dx \right| \leq  \delta \int\limits_{D} A_\eps \grad w_\eps \cdot \grad \overline{w}_\eps \, dx +\frac{A_{max}}{\delta} \int\limits_{D}  |\grad  v|_\eps^2\, dx$$
We also use that for any $\mu>0$:
$$ \left| 2 \int\limits_{D} A_{max} w_\eps \overline{v}_\eps \, dx \right| \leq \frac{A^2_{max}}{\mu} ||w_\eps||^2_{L^2(D)} +\mu  ||v_\eps||^2_{L^2(D)}$$
From the above inequalities we see that: 
\begin{eqnarray*}
\left| a_\eps \big( (w_\eps,v_\eps) ; \mathbb{T}(w_\eps,v_\eps) \big) \right| &\geq&A_{min} \left(1- \delta \right) ||\grad w_\eps||^2_{L^2(D)} +\left(1-\frac{A_{max}}{\delta} \right) ||\grad v_\eps||^2_{L^2(D)} \\
&+&A_{max} \left(1-\frac{A_{max}}{\mu} \right)||w_\eps||^2_{L^2(D)} + (1-\mu) ||v_\eps||^2_{L^2(D)}
\end{eqnarray*}
 for any $\mu \, ,\delta \in (A_{max} , 1)$. Hence $ \mathbb{A}^{-1}_\eps: X(D) \mapsto X(D)$ exists for all $\eps >0$ with $|| \mathbb{A}_\eps^{-1} ||_{\mathcal{L}(X(D))}$ bounded independently of $\eps$. The above analysis also proves that  the Fredholm alternative can be applied to  the operator $({\mathbb A}_\epsilon+{\mathbb B}_\epsilon)$ and equivalently to  \eqref{homequ1}-\eqref{homequ3}. Therefore if  $k$ is not a transmission eigenvalue for $\eps \geq 0$ we have that there is a constant $C_\eps$ that does not depend on $(f_\eps,g_\eps)$ but possibly on $\epsilon>0$ such that the unique solution $(w_\eps,v_\eps)$ of  \eqref{homequ1}-\eqref{homequ3}
$$||w_\eps||_{H^1(D)} +||v_\eps||_{H^1(D)} \leq C_\eps \left(||f_\eps||_{L^2(D)} +||g_\eps||_{L^2(D)} \right).$$
The above analysis show that if $(w_\epsilon,v_\epsilon)\in X(D)$ solves \eqref{homequ1}-\eqref{homequ3} then 
$$({\mathbb I}+{\mathbb K}_\epsilon)(w_\epsilon, v_\epsilon)=(\alpha_\epsilon, \beta_\epsilon)$$ where ${\mathbb K}_\epsilon$ is compact such that
\begin{equation}\label{eq1}
|| \mathbb{K}_\eps (w_\eps ,v_\eps)||_{X(D)} \leq M_1 \left( ||w_\eps||_{L^2(D)} + ||v_\eps||_{L^2(D)} \right)
\end{equation}
and $(\alpha_\eps, \beta_\eps) \in X(D)$ is such that
\begin{equation}\label{eq2}
 ||\alpha_\eps||_{H^1(D)} + ||\beta_\eps||_{H^1(D)} \leq M_2  \left( ||f_\eps||_{L^2(D)} + ||g_\eps||_{L^2(D)} \right)
\end{equation}
with $M_1$ and $M_2$ independent of $\epsilon$ (Note that (\ref{eq1}) holds for ${\mathbb K}={\mathbb A}^{-1}_\epsilon{\mathbb B}_\epsilon$ since obviously $\|{\mathbb B}_{\epsilon}(w_\eps ,v_\eps)\|_{X(D)}$ is bounded by the $L^2(D)\times L^2(D)$ norm of $(w_\eps ,v_\eps)$ and $\|\mathbb{A}^{-1}_\epsilon\|$ is uniformly bounded with respect to $\epsilon$).\\
Next we need to show that  $C_\eps$ is bounded independently of $\eps$. Assume to the contrary that $C_\eps$ is not bounded as $\epsilon\to 0$. If this is true we can find a subsequence such that
$$||w_\eps||_{L^2(D)} +||v_\eps||_{L^2(D)} \geq \gamma_\eps \left(||f_\eps||_{L^2(D)} +||g_\eps||_{L^2(D)} \right)$$
where the sequence $\gamma_\eps \overset{\tiny{ \eps \rightarrow 0} }{\longrightarrow} \infty$. So we define the sequence $(\tilde{w}_\eps , \tilde{v}_\eps) \in X(D)$
$$\tilde{w}_\eps := \frac{ w_\eps}{|| w_\eps ||_{L^2(D)} + || v_\eps ||_{L^2(D)}  }  \, \, \,  \textrm{ and } \, \, \,  \tilde{v}_\eps := \frac{ v_\eps}{|| w_\eps ||_{L^2(D)} + || v_\eps ||_{L^2(D)}  }. $$
Notice that $|| \tilde{w}_\eps ||_{L^2(D)} + || \tilde{v}_\eps ||_{L^2(D)} =1 $ and $(\tilde{w}_\eps , \tilde{v}_\eps)$ solves \eqref{homequ1}-\eqref{homequ3} with $( \tilde{f}_\eps , \tilde{g}_\eps) \in L^2(D) \times L^2(D)$ given by
$$\tilde{f}_\eps := \frac{ f_\eps}{|| w_\eps ||_{L^2(D)} + || v_\eps ||_{L^2(D)}  }  \, \, \,  \textrm{ and } \, \, \, \tilde{g}_\eps := \frac{ g_\eps}{|| w_\eps ||_{L^2(D)} + || v_\eps ||_{L^2(D)}  }. $$
Furthermore we have that $|| \tilde{f}_\eps ||_{L^2(D)} + || \tilde{g}_\eps ||_{L^2(D)} \leq \frac{1}{\gamma_\eps}  \overset{\tiny{ \eps \rightarrow 0} }{\longrightarrow} 0 $ and $(\mathbb{I}+\mathbb{K}_\eps)(\tilde{w}_\eps , \tilde{v}_\eps)= (\tilde{\alpha}_\eps, \tilde{\beta}_\eps)$,  where $\tilde{\alpha}_\eps, \tilde{\beta}_\eps$ are defined from $\tilde{f}_\epsilon$ and $\tilde{g}_\epsilon$ as above. Now from (\ref{eq1}) and (\ref{eq2})  we have that for all $\eps$ sufficiently small 
\begin{align*}
&|| \tilde{w}_\eps ||_{H^1(D)} + || \tilde{v}_\eps ||_{H^1(D)}  \leq  ||\mathbb{K}_\eps(\tilde{w}_\eps , \tilde{v}_\eps)||_{X(D)}+ ||(\tilde{\alpha}_\eps, \tilde{\beta}_\eps)||_{X(D)}, \\
									&\hspace{2cm}\leq M_1 \left( || \tilde{w}_\eps ||_{L^2(D)} + || \tilde{v}_\eps ||_{L^2(D)} \right) +M_2 \left(|| \tilde{f}_\eps ||_{L^2(D)} + || \tilde{g}_\eps ||_{L^2(D)} \right) ,\\
									&\hspace{2cm} \leq M_1+M_2.
\end{align*}
Since $M_1$ and $M_2$ are independent of $\eps$ we have that $(\tilde{w}_\eps , \tilde{v}_\eps)$ is a bounded sequence in $X(D)$ and therefore has a subsequence that converges to $(\tilde{w}, \tilde{v})$ weakly in $X(D)$ (strongly in $L^2(D) \times L^2(D)$). Also we have that $(\tilde{w}, \tilde{v})$ solves \eqref{homtev1} with $(f,g)=(0,0)$. Since $k$ is not a transmission eigenvalue for $\eps=0$ we have that $(\tilde{w}, \tilde{v})=(0,0)$ which contradicts the fact that $|| \tilde{w} ||_{L^2(D)} + || \tilde{v} ||_{L^2(D)} =1$ which proves the claim. 
\end{proof}

Notice that Theorem \ref{imthm1} gives that any sequence $(w_\eps, v_\eps)$ that solves \eqref{homequ1}-\eqref{homequ3} is bounded in $X(D)$ since $f_\eps$ and $g_\eps$ are assumed to converge strongly in $L^2(D)$. We can now state the following convergence result given by the above analysis. 

\begin{theorem} Assume that either $A_{min}>1$ or $A_{max}<1$ and  that $k$
  is not a transmission eigenvalue for $\eps \geq 0$ small enough.  Then we
  have that $(w_\eps, v_\eps)$ solving  \eqref{homequ1}-\eqref{homequ3}
  converges weakly in $X(D)$ \big(strongly in $L^2(D) \times L^2(D)$\big) to
  $(w,v)$ that is a solution of \eqref{homtev1}. If we assume in addition that
  $w\in H^2(D)$ then, $v_\eps$ strongly
  converges to $v$ in $H^1(D)$ and  $w_\eps(x)-  w(x) - 
  \eps w_1(x, x/\eps)$ strongly converges to $0$ in $H^1(D)$ where 
$w_1(x,y) := -\vec \psi(y)\cdot \nabla w(x)$.
 % as in \eqref{corrector1}.  
\end{theorem}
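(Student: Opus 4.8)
The plan is to feed the two‑scale compactness already set up above into the classical corrector (energy) method of periodic homogenization, keeping everything inside the space $X(D)$ so that the transmission coupling between $w_\eps$ and $v_\eps$ is respected throughout.

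The weak convergence and the identification of the limit are essentially contained in the discussion preceding the statement: by Theorem \ref{imthm1} the family $(w_\eps,v_\eps)$ is bounded in $X(D)$ (since $f_\eps\to f$, $g_\eps\to g$ in $L^2(D)$), so along a subsequence $(w_\eps,v_\eps)\weakc(w,v)$ in $X(D)$, strongly in $L^2(D)\times L^2(D)$; the two‑scale passage producing \eqref{phomtscale1}, \eqref{corrector1}, \eqref{homtev1} shows that $(w,v)$ solves \eqref{homtev1}, which has a unique solution because $k$ is not a transmission eigenvalue at $\eps=0$, hence the whole family converges. Along the way one records that $\nabla w_\eps$ two‑scale converges to $\nabla w(x)+\nabla_y w_1(x,y)$ with $w_1(x,y)=-\vec\psi(y)\cdot\nabla w(x)$, that $A_\eps\nabla w_\eps\weakc A_h\nabla w$ and $n_\eps\weakc n_h$ weakly-$*$, and that $v_\eps$ carries no $y$-corrector.

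For the strong convergence assume $w\in H^2(D)$, so that $\Phi_\eps(x):=\nabla w(x)+(\nabla_y w_1)(x,x/\eps)=(I-\nabla_y\vec\psi(x/\eps))\nabla w(x)$ is a bona fide element of $L^2(D)^d$ that strongly two‑scale converges to $\nabla w(x)+\nabla_y w_1(x,y)$. The cell equation \eqref{psi} (equivalently, the Rayleigh–quotient formula for $A_h$) gives $\int_D A_\eps\Phi_\eps\cdot\overline{\Phi_\eps}\,dx\to\int_D A_h\nabla w\cdot\overline{\nabla w}\,dx$, and pairing the two‑scale convergence of $A_\eps\nabla w_\eps$ with the strong two‑scale convergence of $\Phi_\eps$ gives $\int_D A_\eps\nabla w_\eps\cdot\overline{\Phi_\eps}\,dx\to\int_D A_h\nabla w\cdot\overline{\nabla w}\,dx$ as well. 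Expanding $A_{min}\|\nabla w_\eps-\Phi_\eps\|_{L^2(D)}^2\le\mathrm{Re}\int_D A_\eps(\nabla w_\eps-\Phi_\eps)\cdot\overline{(\nabla w_\eps-\Phi_\eps)}\,dx$ then shows that the corrector claim $w_\eps-w-\eps w_1(\cdot,\cdot/\eps)\to0$ in $H^1(D)$ (the $\nabla_x w_1$ contribution being $O(\eps)$) is \emph{equivalent} to the energy convergence $\int_D A_\eps\nabla w_\eps\cdot\overline{\nabla w_\eps}\,dx\to\int_D A_h\nabla w\cdot\overline{\nabla w}\,dx$. To obtain this I would use the coupling: since $w_\eps-v_\eps\in H^1_0(D)$ the pair $(w_\eps-v_\eps,0)$ lies in $X(D)$, and testing \eqref{pa1} with it yields $\int_D A_\eps|\nabla w_\eps|^2\,dx=\int_D A_\eps\nabla w_\eps\cdot\nabla v_\eps\,dx+k^2\int_D n_\eps w_\eps(w_\eps-v_\eps)\,dx-\int_D f_\eps(w_\eps-v_\eps)\,dx$, whose last two terms pass to the homogenized limit; comparing with \eqref{homtev1} tested by $(w-v,0)$ reduces the energy convergence to $\int_D A_\eps\nabla w_\eps\cdot\nabla v_\eps\,dx\to\int_D A_h\nabla w\cdot\nabla v\,dx$, i.e.\ (since $A_\eps\nabla w_\eps\weakc A_h\nabla w$) to $v_\eps\to v$ strongly in $H^1(D)$. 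For this last convergence I would exploit that the $v_\eps$-equation has constant coefficients: interior elliptic regularity gives uniform $H^2$ bounds on each $D'\Subset D$, hence $v_\eps\to v$ in $H^1_{\mathrm{loc}}(D)$, while $\Delta v_\eps=g_\eps-k^2v_\eps\to\Delta v$ in $L^2(D)$ and $v_\eps|_{\partial D}=w_\eps|_{\partial D}$; the only remaining point is to exclude an $O(1)$ boundary layer of $\nabla v_\eps$ near $\partial D$, and this is where $w\in H^2(D)$ is needed — it makes the first‑order corrector admissible with a trace of size $O(\eps^{1/2})$ on $\partial D$, so the boundary data of $v_\eps$ carries no surviving $\eps$-scale oscillation. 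The case $A_{max}<1$ is handled identically with the other $\mathbb T$-isomorphism from the proof of Theorem \ref{imthm1}.

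The hard part is exactly this last step: proving the energy convergence is the same as controlling the boundary‑layer energy, and a fully rigorous treatment needs either a uniform higher‑integrability ($W^{1,p}$, $p>2$) bound for $(w_\eps,v_\eps)$ up to $\partial D$ — which via equi‑integrability kills $\int_{D\setminus D'}A_\eps\nabla w_\eps\cdot\nabla v_\eps\,dx$ as $D'\uparrow D$ — or an explicit boundary corrector, the latter being deferred here to the forthcoming quantitative paper. The remaining ingredients (the two‑scale passages to the limit, the cell identity, the bookkeeping of the $O(\eps)$ remainders, and the weak-$*$ convergences $n_\eps\weakc n_h$, $A_\eps\nabla w_\eps\weakc A_h\nabla w$) are routine.
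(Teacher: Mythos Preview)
Your treatment of the weak convergence and identification of the limit matches the paper's and is fine. The gap is exactly where you flag it: you reduce the corrector statement to $v_\eps\to v$ strongly in $H^1(D)$, and then concede that you cannot close this without Meyers-type higher integrability or an explicit boundary corrector. Neither is needed, and the paper does not appeal to either.

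The paper sidesteps this circularity by using the $\mathbb T$-coercivity of $a_\eps$ from Theorem~\ref{imthm1} to treat $w_\eps$ and $v_\eps$ \emph{jointly}, rather than first proving strong convergence of $v_\eps$ and then feeding it into the $w$-energy. Since $a_\eps(\,\cdot\,;\mathbb T\,\cdot\,)$ is uniformly coercive on $X(D)$, it is enough to show
\[
a_\eps\big((w_\eps-\tilde w_\eps,\,v_\eps-\tilde v_\eps);\mathbb T(w_\eps-\tilde w_\eps,\,v_\eps-\tilde v_\eps)\big)\to 0,
\]
with $\tilde w_\eps(x)=w(x)+\eps w_1(x,x/\eps)$. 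The obstruction you identify---that $\tilde w_\eps$ and $v$ do not agree on $\partial D$, so $(\tilde w_\eps,v)\notin X(D)$---is handled by setting $\tilde v_\eps=v+\eps v_1^\eps$, where $v_1^\eps\in H^1(D)$ is a lifting of $w_1(\cdot,\cdot/\eps)|_{\partial D}$. The hypothesis $w\in H^2(D)$ gives $\eps^{1/2}\|w_1(\cdot,\cdot/\eps)\|_{H^{1/2}(\partial D)}\le C$, so one can arrange $\eps\|v_1^\eps\|_{H^1(D)}\to 0$; thus $(\tilde w_\eps,\tilde v_\eps)\in X(D)$ is an admissible test pair and $\tilde v_\eps\to v$ in $H^1(D)$ automatically. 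One then expands the quadratic form, observes from the variational equation that $a_\eps((w_\eps,v_\eps);\mathbb T(w_\eps,v_\eps))\to a((w,v);\mathbb T(w,v))$, and checks via two-scale convergence that each of the three remaining terms $a_\eps((w_\eps,v_\eps);\mathbb T(\tilde w_\eps,\tilde v_\eps))$, $a_\eps((\tilde w_\eps,\tilde v_\eps);\mathbb T(w_\eps,v_\eps))$, $a_\eps((\tilde w_\eps,\tilde v_\eps);\mathbb T(\tilde w_\eps,\tilde v_\eps))$ converges to the same two-scale energy $L(w,w_1,v)$, which in turn equals $a((w,v);\mathbb T(w,v))$. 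The coercive form then controls $\|\nabla w_\eps-\nabla\tilde w_\eps\|_{L^2}$ and $\|\nabla v_\eps-\nabla v\|_{L^2}$ \emph{simultaneously}; neither has to be proved first as input to the other, and no boundary-layer analysis beyond the trivial lifting estimate is required.
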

\begin{proof}
The first part of the theorem is a direct consequence of the above analysis and
the uniqueness of solutions to \eqref{homtev1}. The corrector type result % in
% the case $A_{min}>1$
is
obtained using the T-coercivity property as follows. % (We shall also point in the
% proof why the adopted approach fails in the case $A_{max}<1$ (see also Remark~\ref{remastrong}).)
We first observe that, due to the strong convergence of the right hand side of
the variational formulation of interior transmission problem, we have that 
$$
(a_\eps + b_\eps) \big( (w_\eps,v_\eps) ; \mathbb{T}(w_\eps,v_\eps) \big) % + b_\eps \big(
% (w_\eps,v_\eps) ; \mathbb{T}(w_\eps,v_\eps) \big)
\rightarrow F(\mathbb{T}(w,v)) =
(a+b)\big( (w,v) ; \mathbb{T}(w,v) \big) % + b \big(
% (w,v) ; \mathbb{T}(w,v) \big) 
$$
as $\eps \rightarrow 0$ where $a$ and $b$ have similar expressions as $a_\eps$ and
$b_\eps$ with $A_\eps$ and $n_\eps$ respectively replaced by $A_h$ and
$n_h$ and $F$ has the same expression as $F_\eps$ with $f_\eps$ and $g_\eps$
respectively replaced with $f$ and $g$. The $L^2$ strong convergence implies that   
$$b_\eps \big(
(w_\eps,v_\eps) ; \mathbb{T}(w_\eps,v_\eps) \big) \rightarrow 
 b \big(
(w,v) ; \mathbb{T}(w,v) \big). $$
We therefore end up with, 
\begin{equation}\label{trick1}
a_\eps \big( (w_\eps,v_\eps) ; \mathbb{T}(w_\eps,v_\eps) \big) \rightarrow 
a\big( (w,v) ; \mathbb{T}(w,v) \big)  
\end{equation}
as $\eps \rightarrow 0$. Let us set  $w_1^\eps(x) := w_1(x, x/\eps)$. From the expression of  $w_1$ one has (see
for instance \cite{shari1}) 
$$
\eps^{1/2} \|w_1^\eps\|_{H^{1/2}(\partial D)} \le C 
$$
for some constant $C$ independent of $\eps$. Therefore we can construct a lifting
function $v_1^\eps \in H^1(D)$ such that $v_1^\eps = w_1^\eps$ on $\partial D$
and 
\begin{equation} \label{nextric}
\eps \|v_1^\eps\|_{H^1(D)} \to 0 \mbox{ as } \eps \to 0.
\end{equation}
Now, taking as test functions $\varphi_1
= \tilde w_\eps $ and $\varphi_2 = \tilde v_\eps$ where $\tilde w_\eps(x):= w(x) + 
  \eps w_1(x, x/\eps)$ and $\tilde v_\eps(x):= v(x) + 
  \eps v_1^\eps(x)$, one has
$$
(a_\eps + b_\eps) \big( (w_\eps,v_\eps) ; \mathbb{T}(\tilde w_\eps,\tilde v_\eps) \big) % + b_\eps \big(
% (w_\eps,v_\eps) ; \mathbb{T}(w_\eps,v_\eps) \big)
\rightarrow F(\mathbb{T}(w,v)).
$$
Using the two-scale convergence of the
  sequences $w_\eps$ and $v_\eps$ together with the form (and regularity) of
  $w_1$ as well as \eqref{nextric}, we easily see that
$$
b_\eps \big( (w_\eps,v_\eps) ; \mathbb{T}(\tilde w_\eps,\tilde v_\eps) \big)
\rightarrow 
 b \big(
(w,v) ; \mathbb{T}(w,v) \big)
$$
while 
$$
 a_\eps \big( (w_\eps,v_\eps) ; \mathbb{T}(\tilde w_\eps,\tilde v_\eps) \big)
\rightarrow L(w,w_1,v)
$$
with 
\begin{multline*}
 L(w,w_1,v) = \frac{1}{|Y|}
\int\limits_D \int\limits_Y A(y) (\nabla w(x)+ \nabla_y w_1(x,y)) \cdot
(\nabla\overline w(x)+ \nabla_y \overline w_1(x,y))  dy dx 
\\
+ \int\limits_D |\nabla v(x)|^2 +
A_{min} |w(x)|^2 + |v(x)|^2 - 2   \nabla \overline w(x)
\nabla v(x) -2 \overline w(x) v(x)  dx
\end{multline*}
 in the case $A_{min} >1$ and 
\begin{multline*}
 L(w,w_1,v) = \frac{1}{|Y|}
\int\limits_D \int\limits_Y A(y) (\nabla w(x)+ \nabla_y w_1(x,y)) \cdot
(\nabla\overline w(x)+ \nabla_y \overline w_1(x,y))  dy dx 
\\
-2 \frac{1}{|Y|}
\int\limits_D \int\limits_Y A(y) (\nabla w(x)+ \nabla_y w_1(x,y)) \cdot
\nabla\overline v(x)  dy dx 
\\
+ \int\limits_D |\nabla v(x)|^2 +
A_{min} |w(x)|^2 + |v(x)|^2  -2 A_{\min}\overline v(x) w(x)   dx
\end{multline*}
 in the case $A_{max} < 1$.  Hence we can conclude that 
$$
F(\mathbb{T}(w,v)) = L(w,w_1,v) + b \big(
(w,v) ; \mathbb{T}(w,v) \big)
$$
and therefore
\begin{equation} \label{trick2}
a\big( (w,v) ; \mathbb{T}(w,v) \big)  = L(w,w_1,v).
\end{equation}
Using \eqref{trick1} and \eqref{trick2} and the T-coercivity, we can apply similar arguments as
in \cite[Theorem 2.6]{allaire1} to obtain the result. Indeed, the T-coercivity shows that it is sufficient to prove
that 
\begin{equation}\label{trick3}
a_\eps \big( (w_\eps-\tilde w_\eps,v_\eps-v) ; \mathbb{T}(w_\eps-\tilde
w_\eps,v_\eps-v) \big) \to 0.
\end{equation}
Now, using the two-scale convergence of the sequences $v_\eps$ and $w_\eps$, we 
observe that each of the quantities 
$$
\begin{array}{l}
a_\eps \big( (w_\eps,v_\eps) ; \mathbb{T}(\tilde w_\eps,v) \big) , \; 
a_\eps \big( (\tilde w_\eps,v) ; \mathbb{T}(w_\eps,v_\eps) \big) 
\mbox{ and } a_\eps \big( (\tilde w_\eps,v) ; \mathbb{T}(\tilde w_\eps,v) \big) 
\end{array}
$$
converges to $L(w, w_1, v)$%  where 
% $$
% \tilde L(w, w_1, v) = L(w, w_1, v) 
% $$
% in the case $A_{min} >1$ and 
% \begin{multline*}
%  \tilde L(w,w_1,v) = \frac{1}{|Y|}
% \int\limits_D \int\limits_Y A(y) (\nabla w(x)+ \nabla_y w_1(x,y)) \cdot
% (\nabla\overline w(x)+ \nabla_y \overline w_1(x,y))  dy dx 
% \\
% -2 \frac{1}{|Y|}
% \int\limits_D \int\limits_Y A(y) (\nabla w(x)+ \nabla_y w_1(x,y)) \cdot
% \nabla\overline v(x) dy dx 
% \\
% + \int\limits_D |\nabla v(x)|^2 +
% A_{min} |w(x)|^2 + |v(x)|^2  -2 A_{\min}\overline v(x) w(x)  dx
% \end{multline*}
%  in the case $A_{max} < 1$. We observe that $\tilde L(w,w_1,v) \neq
%  L(w,w_1,v)$ in the latter case and this is why one cannot conclude in this
%  case using this type of technique (see also Remark~\ref{remastrong}).

Finally, using \eqref{trick1} we can conclude that
$$
a_\eps \big( (w_\eps-\tilde w_\eps,v_\eps-v) ; \mathbb{T}(w_\eps-\tilde
w_\eps,v_\eps-v) \big) \to a\big( (w,v) ; \mathbb{T}(w,v) \big) - L(w, w_1, v)
$$
and then the result is a direct consequence of \eqref{trick3}. % in the case $A_{min} >1$
\end{proof}

% \begin{remark} \label{remastrong}
% {\em  In order to prove the strong convergence of  $(w_\eps, v_\eps)$ in $X(D)$
%   for the case $A_{max} < 1$ it is necessary to count for the boundary layer
%   contribution, in which case one must add to $(w_\eps, v_\eps)$ additional
%   appropriate terms referred to as boundary correctors. This will be subject of a
%   forthcoming paper. We also get, with these techniques, explicit convergence rates in
%   terms of $\eps$ for  $A_{min} > 1$. }
% \end{remark}
%%%%%%%%%%%%%%%%%%%%ITP FOURTH order formulation 
\subsection{The case of $A_\eps \equiv I$} \label{sec3.2} Here we now  assume that either $n_{min}>1$ or $0<n_{max}<1$. For the case where $A_\eps \equiv I$ the interior transmission problem becomes: Find $(w_\eps , v_\eps )\in L^2(D) \times L^2(D)$ such that
\begin{eqnarray}
\Delta w_\eps +k^2 n\left(x/ \eps \right) w_\eps=0 \,  &\textrm{ in }& \,  D\label{aeq11} \\
 \Delta v_\eps + k^2 v_\eps=0  &\textrm{ in }& \,  D \\
 w_\eps-v_\eps=f_\eps  &\textrm{ on }& \partial D \label{hap1}\\
 \frac{\partial w_\eps}{\partial \nu}-\frac{\partial v_\eps}{\partial \nu}=g_\eps &\textrm{ on }& \partial D \label{aeq12}
\end{eqnarray}
for the boundary data  $(f_\eps , g_\eps) \in H^{3/2}(\partial D) \times H^{1/2}(\partial D)$ converging strongly  to $(f,g)\in H^{3/2}(\partial D) \times H^{1/2}(\partial D)$ as $\epsilon \to 0$. Just as in the case for anisotropic media we require that $k^2$ is not a transmission eigenvalue for $\eps \geq 0$ small enough. We formulate the interior transmission problem for the difference $U_\eps :=w_\eps -v_\eps \in H^2(D)$. Using the interior transmission problem one can show that  this $U_\epsilon$ satisfies 
\begin{eqnarray}
0  &=& \left(\Delta +k^2  n_\eps \right) \frac{1}{n_\eps-1} \left(\Delta +k^2  \right) U_\eps \, \, \textrm{ in } \,  D \label{4thitp}
\end{eqnarray}
where
\begin{eqnarray}
v_\eps &=& - \frac{1}{k^2 (n_\eps-1)} \left( \Delta U_\eps +k^2   n_\eps U_\eps \right) \, \, \textrm{ in } \,  D \label{dv}\\
w_\eps &=& - \frac{1}{k^2 (n_\eps-1)} \left( \Delta U_\eps +k^2  U_\eps \right) \, \, \textrm{ in } \,  D \label{dw}
\end{eqnarray}
\begin{theorem}\label{th3.3}
Assume that either $(n_{min}-1)>0$ or $(n_{max}-1)<0$ and $U_\eps \in H^2(D)$ is a bounded sequence, then there is a subsequence such that $U_\eps \weakc U$ in $H^2(D)$ and $(w_\eps , v_\eps) \weakc (w,v)$ in $L^2(D) \times L^2(D)$ \big(strongly in $L^2_{loc}(D) \times L^2_{loc}(D)$\big). Moreover we have that the limit $U$ satisfies 
\begin{eqnarray}
 \left(\Delta +k^2  n_h \right) \frac{1}{n_h-1} \left(\Delta +k^2  \right) U=0 \, \, &\textrm{ in }& \,  D,\label{39} \\
  U=f  \;\;\;\textrm{ and }  \;\;\; \frac{\partial U}{\partial \nu}=g &\textrm{ on }& \partial D,
 \end{eqnarray}
$U=w-v$, and $(w,v)$ satisfy
\begin{eqnarray}
 \Delta v +k^2 v=0 \;\;\; &\textrm{ and }& \;\;\; \Delta w +k^2 n_h w=0\;\; \;\; \textrm{ in } \;\;  D, \label{hap2}\\
 w-v=f\;\;\;  &\textrm{ and }&  \;\;\; \frac{\partial w}{\partial \nu}-\frac{\partial v}{\partial \nu}=g \;\; \;\; \;\; \;\; \;\; \textrm{ on }\;\; \partial D. \label{hap3}
 \end{eqnarray}
\end{theorem}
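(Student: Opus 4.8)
The plan is to exploit the fact that here the oscillating coefficient $n_\eps=n(x/\eps)$ enters the fourth‑order formulation only at \emph{zeroth} order, so that—unlike the $A_\eps\ne I$ case—no homogenization of the principal part (and hence no corrector, no two‑scale expansion) is needed: the only averaging used is the weak limit $n_\eps\rightharpoonup n_h$. Given a bounded sequence $U_\eps\in H^2(D)$ solving \eqref{4thitp} with $U_\eps=f_\eps$, $\partial_\nu U_\eps=g_\eps$ on $\partial D$, I would set
$$
w_\eps:=-\frac{1}{k^2(n_\eps-1)}\left(\Delta U_\eps+k^2 U_\eps\right),\qquad v_\eps:=w_\eps-U_\eps .
$$
Since $n_{min}>1$ or $n_{max}<1$ forces $|n_\eps-1|\ge c>0$ uniformly in $\eps$, these are well defined and $\|w_\eps\|_{L^2(D)}+\|v_\eps\|_{L^2(D)}\le C\|U_\eps\|_{H^2(D)}\le C$. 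By the algebra relating \eqref{4thitp} to \eqref{dv}--\eqref{dw}, they satisfy $\Delta w_\eps+k^2 n_\eps w_\eps=0$ and $\Delta v_\eps+k^2 v_\eps=0$ in $D$, with $U_\eps=w_\eps-v_\eps$.

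Next I would extract the limits. Boundedness of $U_\eps$ in $H^2(D)$ yields a subsequence with $U_\eps\weakc U$ in $H^2(D)$ and (Rellich) $U_\eps\to U$ strongly in $H^1(D)$. Because $n_\eps$ is bounded in $L^\infty(D)$, the right–hand sides of $\Delta w_\eps=-k^2 n_\eps w_\eps$ and $\Delta v_\eps=-k^2 v_\eps$ are bounded in $L^2(D)$; interior elliptic regularity then bounds $w_\eps,v_\eps$ in $H^2(D')$ for every $D'$ with $\overline{D'}\subset D$. Exhausting $D$ by such $D'$ and using a diagonal argument, I would pass to a further subsequence with $w_\eps\weakc w$, $v_\eps\weakc v$ in $L^2(D)$ and $w_\eps\to w$, $v_\eps\to v$ strongly in $L^2_{\mathrm{loc}}(D)$; taking weak $L^2$ limits in $U_\eps=w_\eps-v_\eps$ gives $U=w-v$.

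Then I would pass to the limit in the two Helmholtz equations. Testing $\Delta v_\eps+k^2 v_\eps=0$ against $\varphi\in C_0^\infty(D)$ and using $v_\eps\weakc v$ in $L^2(D)$ gives $\Delta v+k^2 v=0$ in $D$. For the $w$–equation, the periodic averaging lemma (equivalently, the two‑scale framework of \cite{allaire1}) gives $n_\eps\weakc n_h$ weak‑$*$ in $L^\infty(D)$; testing $\Delta w_\eps+k^2 n_\eps w_\eps=0$ against $\varphi\in C_0^\infty(D)$, the Laplacian term passes to the limit by weak convergence while $\int_D n_\eps w_\eps\varphi\,dx\to\int_D n_h w\varphi\,dx$ because $w_\eps\varphi\to w\varphi$ strongly in $L^2(D)$ (compact support plus strong $L^2_{\mathrm{loc}}$ convergence) tested against the weak‑$*$ convergent $n_\eps$. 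Hence $\Delta w+k^2 n_h w=0$ in $D$. The boundary conditions follow since the trace map $H^2(D)\to H^{3/2}(\partial D)\times H^{1/2}(\partial D)$, $U\mapsto(U|_{\partial D},\partial_\nu U|_{\partial D})$, is weakly continuous and $(f_\eps,g_\eps)\to(f,g)$, so $U=f$ and $\partial_\nu U=g$ on $\partial D$. Finally, since $n_h\neq1$ is a constant, combining $U=w-v$ with $\Delta v+k^2v=0$ and $\Delta w+k^2 n_h w=0$ gives $(\Delta+k^2)U=k^2(1-n_h)w$, hence $w=-\frac{1}{k^2(n_h-1)}(\Delta+k^2)U$ and then $(\Delta+k^2 n_h)\frac{1}{n_h-1}(\Delta+k^2)U=0$, which is \eqref{39}.

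The main obstacle is the passage to the limit in the product $n_\eps w_\eps$: a product of two merely weakly convergent factors need not converge to the product of the limits, so one must upgrade $w_\eps$ to strong $L^2_{\mathrm{loc}}$ convergence via interior elliptic regularity for $\Delta w_\eps=-k^2 n_\eps w_\eps$. A related conceptual point worth stressing is that one should \emph{not} homogenize \eqref{4thitp} directly: the middle coefficient $1/(n_\eps-1)$ would then naively produce a harmonic‑type average, whereas decomposing into the Helmholtz pair correctly yields the arithmetic mean $n_h$ in the effective fourth‑order operator.
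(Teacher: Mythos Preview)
Your proof is correct and follows essentially the same approach as the paper: both arguments pass from $L^2$ boundedness of $(w_\eps,v_\eps)$ to strong $L^2_{\mathrm{loc}}$ convergence via interior elliptic regularity for the Helmholtz equations, then exploit the weak-$*$ convergence $n_\eps\rightharpoonup n_h$ in $L^\infty$ against the strongly convergent $w_\eps\varphi$ to obtain $\Delta w+k^2 n_h w=0$. The only cosmetic differences are that the paper upgrades $v_\eps$ first (to $H^1_{\mathrm{loc}}$) and then gets the strong convergence of $w_\eps$ through $w_\eps=U_\eps+v_\eps$, and it derives the fourth-order limit equation by passing to the limit in $-k^2(n_\eps-1)w_\eps=(\Delta+k^2)U_\eps$ rather than by your algebraic recombination of the limiting Helmholtz pair.
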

\begin{proof}
Since $U_\eps$ is a bounded sequence in $H^2(D)$, from (\ref{dv}) and (\ref{dw}) we have that $(w_\eps , v_\eps)$ is a bounded sequence in $L^2(D) \times L^2(D)$. Therefore we have that there is a subsequence still denoted by $(w_\eps , v_\eps)$ that is weakly convergent in $L^2(D) \times L^2(D)$. So we have that for all $\varphi \in \mathcal{C}^{\infty}_0(D)$, there is a $v \in L^2(D)$ such that:   
$$ 0=\int\limits_D v_\eps (\Delta \varphi + k^2 \varphi ) \, dx  \overset{\tiny{ \eps \rightarrow 0} }{\longrightarrow}  \int\limits_D v (\Delta \varphi +k^2 \varphi ) \, dx.$$
This gives that $\Delta v + k^2 v=0$ in the distributional sense. By interior elliptic regularity (see e.g. \cite{wloka}) for all $\Omega \subset \overline{\Omega} \subset D$ and all  $\eps > 0$ we have 
$$ || v_\eps ||_{H^1(\Omega)} \leq C $$
for some constant  independent of $\eps$ which implies (using an increasing sequence of
domains $\Omega_n$ that converges to $D$ and a diagonal extraction process of
the subsequence)  that  a subsequence $v_\eps$  converges to $v$ strongly  in $L^2_{loc}(D)$.
Next  since $w_\eps=U_\eps + v_\eps$ and $U_\epsilon$ is bounded in $H^2(D)$, we  have that $w_\eps$ converges  to some $w$ weakly in $L^2(D)$ and strongly in $L^2_{loc}(D)$. Now using the strong convergence we have that for all $\varphi \in \mathcal{C}^{\infty}_0(D)$ such that  $\overline{\text{supp}(\varphi)} \subset D$ we obtain that 
$$ 0=\int\limits_D w_\eps (\Delta \varphi + k^2 n_\eps \varphi ) \, dx  \overset{\tiny{ \eps \rightarrow 0} }{\longrightarrow}  \int\limits_D w (\Delta \varphi + k^2 n_h \varphi ) \, dx,$$
which gives that $\Delta w + k^2 n_h w=0$ in the distributional sense. Now, the
fact that $-k^2 (n_\eps-1) w_\eps =  \Delta U_\eps +k^2 U_\eps$, the weak
convergence of  $U_\epsilon$ to $U$ in $H^2(D)$ and the local strong convergence
of $w_\epsilon$ to the above $w$ imply that the limit $U$  satisfies
$\left(\Delta +k^2  n_h \right) \frac{1}{n_h-1} \left(\Delta +k^2  \right) U=0$
in $D$ and $U=w-v$. Finally, integration by parts formulas  together with
(\ref{hap1}) and  (\ref{aeq12}) guaranty that $U:=w-v$ satisfies the boundary conditions (\ref{hap2}) and (\ref{hap3}) which ends the proof.
\end{proof}

The above result that connects $w_\epsilon$, $v_\epsilon$ and $U_\epsilon$ with  the respective limits  requires that $U_\epsilon$ is a bounded sequence. Next we show that this is the case for every solution to the interior transmission problem.
To this end, since $(f_\eps , g_\eps) \in H^{3/2}(\partial D) \times H^{1/2}(\partial D)$ there is a lifting function $\phi_\eps \in  H^2(D)$ such that $\phi_\eps \big|_{\partial D}=f_\epsilon$ and $\frac{\partial \phi_\eps}{\partial \nu} \big|_{\partial D}=g_\eps$ and 
\begin{equation}\label{est}
 ||\phi_\eps||_{H^2(D)} \leq C \left( ||f_\eps||_{H^{3/2}(\partial D)} +||g_\eps||_{H^{1/2}(\partial D)}\right)
\end{equation} 
where the constant $C$ is independent of $\eps$ and 
$\phi_\eps \to \phi $
strongly in
$H^2(D)$  where  $\phi \big|_{\partial D}=f$ and $\frac{\partial \phi}{\partial \nu} \big|_{\partial D}=g$. Now following \cite{cakginhad} and \cite{cakhad} we  define the bounded sesquilinear forms on  $H^2_0(D) \times H^2_0(D)$:
\begin{eqnarray}
\mathcal{A}_\eps(u,\varphi) &=&  \int\limits_D  \frac{1}{ n_\eps-1} \left[ \left( \Delta u +k^2 u \right) \left( \Delta \overline{\varphi} +k^2 \overline{\varphi} \right)\right] +k^4 u \overline{\varphi}  \, dx, \label{bad1}\\
\widehat{\mathcal{A}}_\eps(u,\varphi) &=& \int\limits_D  \frac{n_\eps}{ 1-n_\eps} \left[ \left( \Delta u +k^2 u \right) \left( \Delta \overline{\varphi} +k^2 \overline{\varphi} \right)\right] +\Delta u \Delta \overline{\varphi}  \, dx, \label{bad2}\\
\mathcal{B}(u,\varphi) &=& \int\limits_D \grad u \grad \overline{\varphi}  \, dx. \label{bad3}
\end{eqnarray}
With the help of the lifting function $\phi_\epsilon$, we have that  $u_\eps \in H^2_0(D)$  where $U_\eps=u_\eps+\phi_\eps$ and  that $u_\eps$ solve the variational problems
\begin{eqnarray}
\mathcal{A}_\eps(u_\eps,\varphi) -k^2\mathcal{B}(u_\eps,\varphi) =  L_\eps (\varphi) \label{h2var1} \\
\widehat{\mathcal{A}}_\eps(u_\eps,\varphi) -k^2\mathcal{B}(u_\eps,\varphi) =   \widehat{L}_\eps (\varphi) \label{h2var2} 
\end{eqnarray}
where the conjugate linear functionals  are defined as follows
$$L_\eps (\varphi) =k^2\mathcal{B}(\phi_\eps,\varphi)-\mathcal{A}_\eps(\phi_\eps,\varphi) \quad \text{ and }\quad  \widehat{L}_\eps (\varphi)=k^2\mathcal{B}(\phi_\eps,\varphi)-\widehat{\mathcal{A}}_\eps(\phi_\eps,\varphi).$$
Let ${\mathbb A}_\epsilon: H^2_0(D)\to H^2_0(D)$,  $\widehat{{\mathbb A}}_\epsilon: H^2_0(D)\to H^2_0(D)$ and ${\mathbb B}: H^2_0(D)\to H^2_0(D)$  be bounded linear operators defined by the sesquilinear forms (\ref{bad1}), (\ref{bad2}) and (\ref{bad3}) by means of Riesz representation theorem. Obviously ${\mathbb B}$ is a compact operator and it does not depend on $\epsilon$, and furthermore $\|{\mathbb B}(u_\epsilon)\|_{H^2(D)}$ is bounded by $\|u_\epsilon\|_{H^1(D)}$.   
In \cite{cakhad}  it is shown that $\mathcal{A}_\eps(\cdot, \cdot)$ is coercive when $\frac{1}{ n_\eps-1}  \geq \alpha >0$ for all $\eps >0$ (which is satisfied if $n_{min}>1$)  whereas  $\widehat{\mathcal{A}}_\eps(\cdot, \cdot)$ is coercive  when $ \frac{n_\eps}{ 1-n_\eps }\geq \alpha >0$ for all $\epsilon>0$ (which is satisfied if $0<n_{max}<1$) and furthermore the coercivity constant depends only on $D$ and $\alpha$. Hence ${\mathbb A}_\epsilon^{-1}$ exists if  $n_{min}>1$ and $\widehat{{\mathbb A}}^{-1}_\epsilon$ exists if $0<n_{max}<1$ and their norm is uniformly bounded with respect to $\epsilon$.  
\begin{theorem}\label{th3.4}
Assume that either $n_{min}>1$ or $0<n_{max}<1$, and that $k$ is not a transmission eigenvalue for $\epsilon\geq 0$ small enough. If $U_\eps \in H^2(D)$ is a solution to \eqref{4thitp} such that  $U_\eps=f_\eps  \textrm{ and }  \frac{\partial U_\eps}{\partial \nu}=g_\eps \textrm{ on } \partial D$,  then there is a constant $C>0$ independent of $\epsilon\geq 0$ and $(f_\eps,g_\eps)$ such that: 
$$||U_\eps||_{H^2(D)} \leq C \left(  ||f_\eps||_{H^{3/2}(\partial D)} +||g_\eps||_{H^{1/2}(\partial D)}  \right).$$
\end{theorem}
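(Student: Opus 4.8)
The strategy mirrors the proof of Theorem~\ref{imthm1}: establish a uniform-in-$\eps$ Fredholm (resp. coercivity) estimate for the variational problems \eqref{h2var1}--\eqref{h2var2}, then run a compactness/contradiction argument using Theorem~\ref{th3.3} to rule out a blow-up of the constant as $\eps\to0$. First I would record the a~priori bound for a \emph{fixed} $\eps$: since $\mathcal{A}_\eps$ (resp. $\widehat{\mathcal{A}}_\eps$) is coercive on $H^2_0(D)$ with coercivity constant depending only on $D$ and $\alpha$ (from \cite{cakhad}, as quoted above), and $\mathbb{B}$ is compact with $\|\mathbb{B}(u_\eps)\|_{H^2(D)}\lesssim\|u_\eps\|_{H^1(D)}$ independently of $\eps$, the operator $\mathbb{A}_\eps-k^2\mathbb{B}$ (resp. $\widehat{\mathbb{A}}_\eps-k^2\mathbb{B}$) is Fredholm of index zero; it is injective precisely because $k$ is not a transmission eigenvalue, hence boundedly invertible, giving $\|u_\eps\|_{H^2(D)}\le C_\eps\bigl(\|f_\eps\|_{H^{3/2}(\partial D)}+\|g_\eps\|_{H^{1/2}(\partial D)}\bigr)$ via the bounds \eqref{est} on the lifting $\phi_\eps$. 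Writing $U_\eps=u_\eps+\phi_\eps$ then yields the same bound for $U_\eps$, with a constant $C_\eps$ a~priori depending on $\eps$.

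The substance of the proof is showing $C_\eps$ can be taken independent of $\eps$. I would argue by contradiction: if not, there is a sequence $\eps\to0$ and data $(f_\eps,g_\eps)$ with, after normalizing, $\|U_\eps\|_{H^2(D)}=1$ while $\|f_\eps\|_{H^{3/2}(\partial D)}+\|g_\eps\|_{H^{1/2}(\partial D)}\to0$. Since $\{U_\eps\}$ is bounded in $H^2(D)$, Theorem~\ref{th3.3} applies: up to a subsequence $U_\eps\weakc U$ in $H^2(D)$, and the associated $(w_\eps,v_\eps)$ defined via \eqref{dv}--\eqref{dw} converge weakly in $L^2(D)\times L^2(D)$, with $U$ solving the homogenized fourth-order equation \eqref{39} and boundary data $U=0$, $\partial U/\partial\nu=0$ on $\partial D$ (the boundary data being the weak $H^{3/2}\times H^{1/2}$ limits of $(f_\eps,g_\eps)$, hence zero). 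Because $k$ is not a transmission eigenvalue for $\eps=0$, the homogenized interior transmission problem with zero data has only the trivial solution, so $U=0$ and consequently the weak limit of $(w_\eps,v_\eps)$ is $(0,0)$. To reach a contradiction with $\|U_\eps\|_{H^2(D)}=1$ I need \emph{strong} convergence $U_\eps\to0$ in $H^2(D)$; this is where the T-coercivity structure enters. Decompose $U_\eps=u_\eps+\phi_\eps$ with $\phi_\eps\to0$ strongly in $H^2(D)$ (from \eqref{est} and $(f_\eps,g_\eps)\to(0,0)$), so it suffices to show $u_\eps\to0$ strongly in $H^2_0(D)$; test \eqref{h2var1} (resp. \eqref{h2var2}) with $\varphi=u_\eps$ and use that $L_\eps(u_\eps)\to0$ (since $\phi_\eps\to0$ in $H^2$) and that the compact term $k^2\mathcal{B}(u_\eps,u_\eps)=k^2\|\nabla u_\eps\|_{L^2(D)}^2\to0$ because $u_\eps\weakc0$ in $H^2_0(D)$ implies $u_\eps\to0$ strongly in $H^1(D)$; hence $\mathcal{A}_\eps(u_\eps,u_\eps)\to0$, and coercivity of $\mathcal{A}_\eps$ with constant independent of $\eps$ forces $\|u_\eps\|_{H^2(D)}\to0$. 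Then $\|U_\eps\|_{H^2(D)}\to0$, contradicting the normalization, and the proof is complete.

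The main obstacle is the last step: coercivity of $\mathcal{A}_\eps$ only controls $\|u_\eps\|_{H^2}$ up to the compact perturbation $k^2\mathcal{B}$, so one must genuinely exploit the compact embedding $H^2_0(D)\hookrightarrow H^1(D)$ together with the weak convergence $u_\eps\weakc0$ to kill that perturbation; this is exactly the standard mechanism by which a Fredholm operator with a uniform coercive part and a \emph{uniformly} compact lower-order part has a uniformly bounded inverse once injectivity of the limit operator is known. One should be slightly careful that the lower-order term in $\mathcal{A}_\eps$ and $\widehat{\mathcal{A}}_\eps$ also contains $k^4 u\overline\varphi$ and $k^2$-cross terms inside $(\Delta u+k^2u)(\Delta\overline\varphi+k^2\overline\varphi)$, but all of these beyond the leading $\|\Delta u\|_{L^2}^2$-type term are controlled by $\|u_\eps\|_{H^1(D)}\to0$, so the argument goes through verbatim in both the $n_{min}>1$ and $0<n_{max}<1$ cases with $\mathcal{A}_\eps$ replaced by $\widehat{\mathcal{A}}_\eps$.
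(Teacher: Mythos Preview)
Your proof is correct and follows the same overall strategy as the paper: reduce to $u_\eps\in H^2_0(D)$ via the lifting $\phi_\eps$, use the uniform coercivity of $\mathcal{A}_\eps$ (resp.\ $\widehat{\mathcal{A}}_\eps$) together with the $\eps$-independent compact perturbation $\mathbb{B}$ to get the Fredholm structure, obtain a bound with constant $C_\eps$, and then run a contradiction argument invoking Theorem~\ref{th3.3} to show $C_\eps$ is bounded uniformly.

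The one difference worth noting is the normalization in the contradiction step. You normalize so that $\|U_\eps\|_{H^2(D)}=1$, which then forces you to upgrade the weak $H^2$ convergence $u_\eps\weakc 0$ to \emph{strong} $H^2$ convergence; you do this correctly by testing the variational identity with $u_\eps$ and using the uniform coercivity of $\mathcal{A}_\eps$ together with $u_\eps\to0$ in $H^1$. The paper instead normalizes so that $\|\tilde u_\eps\|_{H^1(D)}=1$, then uses the operator inequality $\|\tilde u_\eps\|_{H^2}\le k^2\|\mathbb{K}_\eps\tilde u_\eps\|_{H^2}+\|\tilde\alpha_\eps\|_{H^2}\le M_1+M_2$ to get $H^2$-boundedness directly; the contradiction then comes immediately from the compact embedding $H^2_0(D)\hookrightarrow H^1(D)$ (weak $H^2$ limit zero implies strong $H^1$ limit zero, contradicting $\|\tilde u\|_{H^1}=1$). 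The paper's choice is slightly more economical in that it avoids your final coercivity step, but your route is equally valid and arguably makes the role of the uniform coercivity more transparent.
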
 

\begin{proof}
First recall that  $U_\eps=u_\eps+\phi_\eps$ where $u_\eps \in H^2_0(D)$ satisfies either (\ref{h2var1}) or (\ref{h2var2}) and $\phi_\eps \in  H^2(D)$ satisfies (\ref{est}). Therefore it is sufficient to prove the result for $u_\eps$. From the discussion above we know that $u_\epsilon$ satisfies
\begin{equation}\label{fred}
({\mathbb I}-k^2{\mathbb K}_\epsilon)(u_\epsilon)=\alpha_\epsilon
\end{equation}
where ${\mathbb K}_\epsilon={\mathbb A}_\epsilon^{-1}{\mathbb B}$ and $\alpha_\epsilon\in H^2_0(D)$ is the Riesz representation of $L_\eps$ if $n_{min}>1$, and  ${\mathbb K}_\epsilon=\widehat{{\mathbb A}}_\epsilon^{-1}{\mathbb B}$ and $\alpha_\epsilon\in H^2_0(D)$ is the Riesz representation of $\widehat{L}_\eps$ if $0<n_{max}<1$. In both cases
$$
\|{\mathbb K}_\epsilon(u_\epsilon)\|_{H^2(D)}\leq M_1\|u_\epsilon\|_{H^1(D)} 
$$
and
$$
\|\alpha_\epsilon\|_{H^2(D)}\leq M_2 \left(||f_\eps||_{H^{3/2}(\partial D)} +||g_\eps||_{H^{1/2}(\partial D)} \right)
$$
with $M_1$ and $M_2$ independent of $\epsilon>0$.  Now since $k^2$ is not a
transmission eigenvalue for $\epsilon\geq 0$ (small enough), the Fredholm alternative applied to (\ref{fred}) guaranties the existence of a constant  $C_\eps$ independent of $f_\epsilon,g_\epsilon$ such that $$||u_\eps||_{H^2(D)} \leq C_\eps \left(||f_\eps||_{H^{3/2}(\partial D)} +||g_\eps||_{H^{1/2}(\partial D)} \right).$$ In the same way as in Theorem \ref{imthm1}, we can now show that $C_\eps$ is bounded independently of $\eps$. Indeed, to the contrary assume that  $C_\eps$ is not bounded as $\epsilon\to 0$. Then we can find a subsequence $u_{\epsilon}$ such that
$$||u_\eps||_{H^1(D)} \geq \gamma_\epsilon \left(||f_\eps||_{H^{3/2}(\partial D)} +||g_\eps||_{H^{1/2}(\partial D)} \right)$$
and  $\gamma_\epsilon \rightarrow \infty$ as $\eps \rightarrow 0$. Let us define the sequences $\tilde{u}_\eps:= \frac{u_\eps}{||u_\eps||_{H^1(D)}}$,  $\tilde{f}_\eps:= \frac{f_\eps}{||u_\eps||_{H^1(D)}}$ and $\tilde{g}_\eps:= \frac{g_\eps}{||u_\eps||_{H^1(D)}}$. Hence we have that $(\tilde{f}_\eps , \tilde{g}_\eps) \rightarrow (0,0)$ as $\eps \rightarrow 0$ and  $({\mathbb I}-k^2{\mathbb K}_\epsilon)(\tilde u_\epsilon)=\tilde\alpha_\epsilon$. Hence
\begin{align*}
|| \tilde{u}_\eps ||_{H^2(D)} &\leq  k^2 ||\mathbb{K}_\eps(\tilde{u}_\eps)||_{H^2(D)}+ ||\tilde{\alpha}_\eps||_{H^2(D)}, \\
									&\leq M_1 || \tilde{u}_\eps ||_{H^1(D)}+M_2 \left(|| \tilde{f}_\eps ||_{H^{-3/2}(\partial D)} + || \tilde{g}_\eps ||_{H^{1/2}(\partial D)} \right)\leq M_1+M_2.
\end{align*}
Hence  $\tilde{u}_\eps$ is bounded and therefore has a weak limit in $H^2_0(D)$, which from Theorem \ref{th3.3} is a solution to the homogenized equation (\ref{39}) with zero boundary data. This implies that $\tilde{u}=0$ since $k^2$ is not a transmission eigenvalue for $\eps=0$ which contradicts the fact that $||\tilde{u}||_{H^1(D)}=1$, proving the result.
\end{proof}

We can now state the convergence result for the interior transmission problem. 

\begin{theorem}
Assume that either $n_{min}>1$ or $0<n_{max}<1$ and $k$ is not a transmission
eigenvalue for $\epsilon\geq 0$ small enough. Let $(w_\epsilon, v_\epsilon)\in
L^2(D)\times L^2(D)$ be such that $U_\eps=w_\epsilon-v_\epsilon \in H^2(D)$ is
a sequence of solutions to \eqref{4thitp} with $(f_\eps , g_\eps) \in
H^{3/2}(\partial D) \times H^{1/2}(\partial D)$ converging strongly  to
$(f,g)\in H^{3/2}(\partial D) \times H^{1/2}(\partial D)$ as $\epsilon\to
0$. Then  $U_\eps \weakc U$ in $H^2(D)$ and $(w_\eps , v_\eps) \weakc (w,v)$ in
$L^2(D) \times L^2(D)$ \big(strongly in $L^2_{loc}(D) \times
L^2_{loc}(D)$\big), where the limit $U$ satisfies 
\begin{eqnarray} \label{homA=1.1}
 \left(\Delta +k^2  n_h \right) \frac{1}{n_h-1} \left(\Delta +k^2  \right) U=0 \, \, &\textrm{ in }& \,  D \\\label{homA=1.2}
  U=f  \;\;\;\textrm{ and }  \;\;\; \frac{\partial U}{\partial \nu}=g &\textrm{ on }& \partial D,
 \end{eqnarray}
$U=w-v$, and $(w,v)$ satisfy
\begin{eqnarray}
 \Delta v +k^2 v=0 \;\;\; &\textrm{ and }& \;\;\; \Delta w +k^2 n_h w=0\;\; \;\; \textrm{ in } \;\;  D\\
 w-v=f\;\;\;  &\textrm{ and }&  \;\;\; \frac{\partial w}{\partial \nu}-\frac{\partial v}{\partial \nu}=g \;\; \;\; \;\; \;\; \;\; \textrm{ on }\;\; \partial D
 \end{eqnarray}
\end{theorem}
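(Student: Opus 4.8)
The plan is to deduce this statement directly from Theorem~\ref{th3.4} and Theorem~\ref{th3.3}, the only genuinely new ingredient being a uniqueness argument that upgrades subsequential convergence to convergence of the whole sequence. First I would observe that the strong convergence $(f_\eps,g_\eps)\to(f,g)$ in $H^{3/2}(\partial D)\times H^{1/2}(\partial D)$ makes the quantity $\|f_\eps\|_{H^{3/2}(\partial D)}+\|g_\eps\|_{H^{1/2}(\partial D)}$ bounded uniformly in $\eps$. Since each $U_\eps$ solves \eqref{4thitp} with $U_\eps=f_\eps$, $\partial U_\eps/\partial\nu=g_\eps$ on $\partial D$, and $k$ is not a transmission eigenvalue for $\eps\geq 0$ small, Theorem~\ref{th3.4} gives $\|U_\eps\|_{H^2(D)}\leq C$ with $C$ independent of $\eps$; that is, $U_\eps$ is a bounded sequence in $H^2(D)$.

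With this bound in hand the hypotheses of Theorem~\ref{th3.3} are satisfied, so from any subsequence of $(U_\eps)$ one can extract a further subsequence along which $U_\eps\weakc\widetilde U$ in $H^2(D)$ and $(w_\eps,v_\eps)\weakc(\widetilde w,\widetilde v)$ in $L^2(D)\times L^2(D)$, strongly in $L^2_{loc}(D)\times L^2_{loc}(D)$, with $\widetilde U=\widetilde w-\widetilde v$ solving the homogenized fourth-order equation \eqref{39} and $(\widetilde w,\widetilde v)$ solving \eqref{hap2}--\eqref{hap3}. The boundary data of $\widetilde U$ is then identified as $f$ and $g$: the trace maps $H^2(D)\to H^{3/2}(\partial D)$ and $U\mapsto\partial_\nu U$, $H^2(D)\to H^{1/2}(\partial D)$, are bounded, hence weakly continuous, so $\widetilde U|_{\partial D}$ and $\partial_\nu\widetilde U|_{\partial D}$ are the weak limits of $f_\eps$ and $g_\eps$, which coincide with $f$ and $g$ by strong convergence. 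Therefore $\widetilde U$ solves \eqref{homA=1.1}--\eqref{homA=1.2}.

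Finally, since $k$ is not a transmission eigenvalue for $\eps=0$, problem \eqref{homA=1.1}--\eqref{homA=1.2} has a unique solution $U$, and the associated $(w,v)$ is then determined through \eqref{dv}--\eqref{dw} with $n_h$ in place of $n_\eps$. Hence every subsequential limit produced above coincides with this fixed $(U,w,v)$, and a standard subsequence argument (every subsequence admits a further subsequence with the same weak limit) promotes the convergence to the whole sequence: $U_\eps\weakc U$ in $H^2(D)$ and $(w_\eps,v_\eps)\weakc(w,v)$ in $L^2(D)\times L^2(D)$, strongly in $L^2_{loc}$. The only point needing care is the passage to the limit in the interior equations and in the representation formulas \eqref{dv}--\eqref{dw}, where the weak $H^2$ convergence of $U_\eps$ must be combined with the local strong $L^2$ convergence of $w_\eps$ and interior elliptic regularity for $v_\eps$ (note $n_\eps\to n_h$ only weakly$-*$ in $L^\infty$, so one genuinely needs the strong $L^2_{loc}$ convergence of $w_\eps$ to close $\int_D w_\eps(\Delta\varphi+k^2n_\eps\varphi)\to\int_D w(\Delta\varphi+k^2 n_h\varphi)$); but this is precisely what is carried out in the proof of Theorem~\ref{th3.3}, so here it is only a matter of assembling the two theorems, and no new difficulty arises.
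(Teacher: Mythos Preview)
Your proof is correct and follows exactly the route the paper takes: the paper's own proof is a single sentence stating that the result follows from combining Theorem~\ref{th3.3} and Theorem~\ref{th3.4} together with the uniqueness of the solution to \eqref{homA=1.1}--\eqref{homA=1.2}. You have simply fleshed out the details of that combination (the uniform $H^2$ bound, the subsequential identification of the limit, and the uniqueness argument to pass to the full sequence), so there is nothing to add.
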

\begin{proof}
The result follows from combining Theorem \ref{th3.3} and Theorem \ref{th3.4}
and the uniqueness of solution for \eqref{homA=1.1}-\eqref{homA=1.2}.
\end{proof}
%%%%%%%%%%%%%%%%%%%%%%%%%%%%%%%%%%%%%
\section{Convergence of the Transmission Eigenvalues}
Using the convergence analysis for the solution of the interior transmission problem, we now prove the convergence of a sequence of real transmission eigenvalues of the periodic media, namely of  those who are bounded with respect to the small parameter $\epsilon$. The following lemmas provide conditions for the existence of real transmission eigenvalues that are bounded in $\epsilon$.
\begin{lemma}\label{lem4.1} The following holds:
\begin{enumerate}
\item Assume that $A_{\epsilon}=I$ for all $\epsilon>0$ and either $n_{min}>1$ or $0<n_{max}<1$. There exists an infinite sequence of real transmission eigenvalues $k^j_\epsilon$, $j\in{\mathbb N}$ of  (\ref{def1})-(\ref{def2}) accumulating at $+\infty$ such that
\begin{eqnarray*}
k^j(n_{max},D)\leq k^j_\epsilon<k^j(n_{min},D) \qquad &\mbox{if}&\; n_{min}>1\\
k^j(n_{min},D)\leq k^j_\epsilon<k^j(n_{max},D) \qquad &\mbox{if}&\; 0<n_{max}<1
\end{eqnarray*}
where $k^j(n,D)$ denotes an eigenvalue of (\ref{def1})-(\ref{def2}) with $A_\epsilon=I$ and $n_{\epsilon}=n$.
\item Assume that $n_{\epsilon}=1$ for all $\epsilon>0$ and either $A_{min}>1$ or $0<A_{max}<1$. There exists an infinite sequence of real transmission eigenvalues $k^j_\epsilon$, $j\in{\mathbb N}$ of  (\ref{def1})-(\ref{def2}) accumulating at $+\infty$ such that
\begin{eqnarray*}
k^j(a_{max},D)\leq k^j_\epsilon\leq k^j(a_{min},D) \qquad &\mbox{if}&\; a_{min}>1\\
k^j(a_{min},D)\leq k^j_\epsilon\leq k^j(a_{max},D) \qquad &\mbox{if}&\; 0<a_{max}<1
\end{eqnarray*}
where $k^j(a,D)$ denotes an eigenvalue of (\ref{def1})-(\ref{def2}) with $A_\epsilon=aI$ and $n_{\epsilon}=1$.
\end{enumerate}
Here $j$ counts the eigenvalue in the sequence under consideration which may not necessarily be the $j$-th transmission eigenvalue. In particular the first transmission eigenvalue satisfies the above estimates.
\end{lemma}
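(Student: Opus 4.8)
The plan is to derive both parts of the lemma from the self‑adjoint, $k$‑dependent reformulation of the transmission eigenvalue problem of \cite{cakginhad} and \cite{cakonikirsch}, combined with a quadratic‑form monotonicity argument that compares the periodic medium with the two homogeneous media whose constitutive parameters are the extreme values in \eqref{c0}--\eqref{c1}.

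Consider Part~1 with $A_\eps\equiv I$ and $n_{min}>1$. Writing $\tau=k^2$ and regarding the spectral parameter occurring \emph{inside} the form $\mathcal A_\eps$ of \eqref{bad1} as an independent variable $\tau$, a real number $k>0$ is a transmission eigenvalue of \eqref{def1}--\eqref{def2} iff there is a nonzero $U\in H^2_0(D)$ with $\mathcal A_{\eps,\tau}(U,V)=\tau\,\mathcal B(U,V)$ for all $V\in H^2_0(D)$; here $\mathcal A_{\eps,\tau}$ is coercive on $H^2_0(D)$ with a constant depending only on $D$ and $n_{max}$ (hence uniform in $\eps$ and $\tau$) and $\mathcal B$ is compact and positive, so for each fixed $\tau>0$ the pair $(\mathcal A_{\eps,\tau},\mathcal B)$ is self‑adjoint and positive and has an increasing sequence $\{\lambda_j^\eps(\tau)\}_{j\ge1}$, $\lambda_j^\eps(\tau)\to\infty$, of generalized eigenvalues given by
\begin{equation*}
\lambda_j^\eps(\tau)=\min_{\substack{W\subset H^2_0(D)\\ \dim W=j}}\ \max_{0\ne U\in W}\ \frac{\mathcal A_{\eps,\tau}(U,U)}{\mathcal B(U,U)} .
\end{equation*}
As in \cite{cakginhad}, $\tau\mapsto\lambda_j^\eps(\tau)$ is continuous on $(0,\infty)$ with $\lambda_j^\eps(\tau)\to\lambda_j^\eps(0)>0$ as $\tau\to0^+$, the real transmission eigenvalues are exactly the $k=\sqrt\tau$ with $\lambda_j^\eps(\tau)=\tau$ for some $j$, and for each $j$ the first crossing $\tau_j^\eps>0$ of $\lambda_j^\eps$ with the diagonal yields an increasing sequence $k^j_\eps:=\sqrt{\tau_j^\eps}$ accumulating at $+\infty$.

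The rest is monotonicity. From $\mathcal A_{\eps,\tau}(U,U)=\int_D\frac{1}{n_\eps-1}|\Delta U+\tau U|^2+\tau^2|U|^2\,dx$ and $\frac1{n_{max}-1}\le\frac1{n_\eps(x)-1}\le\frac1{n_{min}-1}$ a.e., we get $\mathcal A^{(n_{max})}_{\tau}(U,U)\le\mathcal A_{\eps,\tau}(U,U)\le\mathcal A^{(n_{min})}_{\tau}(U,U)$ for all $U,\tau$ (with $\mathcal A^{(c)}_\tau$ the form for constant index $c$), hence $\lambda^{(n_{max})}_j(\tau)\le\lambda^\eps_j(\tau)\le\lambda^{(n_{min})}_j(\tau)$. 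Since $\lambda^{(n_{max})}_j(\tau)>\tau$ on $\bigl(0,(k^j(n_{max},D))^2\bigr)$ by definition of the first crossing, so is $\lambda^\eps_j$, giving $k^j_\eps\ge k^j(n_{max},D)$; and from $\lambda^\eps_j\bigl((k^j(n_{min},D))^2\bigr)\le(k^j(n_{min},D))^2$ together with continuity and the intermediate value theorem we obtain $k^j_\eps<k^j(n_{min},D)$ (with equality only in the degenerate case $n$ constant). The case $0<n_{max}<1$ is identical, using $\widehat{\mathcal A}_{\eps,\tau}$ from \eqref{bad2} and the fact that $n\mapsto\frac{n}{1-n}$ is increasing, which reverses the inequalities; and since $\lambda^\eps_j\ge\lambda^\eps_1$ the $\eps$‑problem has no transmission eigenvalue below $k^1(n_{max},D)$, so its first transmission eigenvalue also obeys the stated bounds.

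Part~2 runs along the same lines: for $n_\eps\equiv1$ and $A_{min}>1$ (resp.\ $0<A_{max}<1$) one uses the self‑adjoint $k$‑dependent variational formulation of the anisotropic transmission eigenvalue problem from \cite{cakonikirsch} and \cite{harris}, in which $A_\eps$ enters through a term that is monotone, in the sense of quadratic forms, in $A_\eps$; the inclusions $A_{min}I\preceq A(x/\eps)\preceq A_{max}I$ then sandwich the generalized eigenvalue curves of the periodic problem between those of the homogeneous isotropic media $A_{min}I$ and $A_{max}I$, and the crossing/intermediate‑value argument above yields the sequence with $k^j(A_{max},D)\le k^j_\eps\le k^j(A_{min},D)$ when $A_{min}>1$, the inequalities reversed when $0<A_{max}<1$; the index $j$, inherited from the comparison problems, merely labels the eigenvalues produced by this construction and need not match the global ordering of all transmission eigenvalues of the $\eps$‑problem. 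The main obstacle throughout is that the transmission eigenvalue problem is itself non‑self‑adjoint, so monotonicity cannot be read off from a single Rayleigh quotient — the device of fixing the spectral parameter inside the coercive form and reading eigenvalues as crossings of $\tau\mapsto\lambda_j(\tau)$ with the diagonal is what makes existence and monotonicity accessible; and because $\tau\mapsto\lambda_j(\tau)$ need not be monotone in $\tau$, one must trap the \emph{first} crossing from both sides using the homogeneous comparison problems and continuity, rather than comparing ``$j$‑th eigenvalues'' directly.
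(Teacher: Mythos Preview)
Your proposal is correct and follows essentially the same approach as the paper's own proof, which merely sketches the argument and refers to \cite{cakginhad} and \cite{cakonikirsch}: recast the (non-self-adjoint) transmission eigenvalues as roots of $\lambda_j(\tau)-\tau=0$ for the auxiliary self-adjoint generalized eigenvalue problem, use the min--max/Rayleigh characterization to obtain monotonicity of $\lambda_j(\tau)$ in the coefficient, and then trap the first crossing between those of the two homogeneous comparison media. Your write-up simply makes explicit what the paper leaves to the cited references.
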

\begin{proof} The detailed proof of the above statements can be found in \cite{cakginhad}. We remark that the statements are not proven for all real transmission eigenvalues. For example in the case of first statement, from the proofs in  \cite{cakginhad}, real transmission eigenvalues are roots of $\lambda_j(\tau, n_\epsilon, D)-\tau=0$, where $\lambda_j$, $j=1\dots $, are eigenvalues of some auxiliary selfadjoint eigenvalue problem satisfying the Rayleigh quotient. The latter implies lower and upper bounds for $\lambda_j$ in terms of $n_{min}$ and $n_{max}$, and these bounds are also satisfied by  the transmission eigenvalues that are the smallest root of each  $\lambda_j(\tau, n_\epsilon, D)-\tau=0$. Same argument applies to the second statement also. In particular the estimates hold for the first transmission eigenvalue.
\end{proof}
The existence results and estimates on real transmission eigenvalues are more restrictive for the case when both $A_\epsilon \neq I$ and $n_\epsilon \neq 1$. The following result is proven in \cite{cakonikirsch} (see also \cite{CC-book}).
\begin{lemma} \label{lem4.2} The following holds:
\begin{enumerate}
\item Assume that either $a_{min}>1$ and $0<n_{max}<1$ or $0<a_{max}<1$ and  $n_{min}>1$. There exists a infinite sequence of real transmission eigenvalues $k^j_\epsilon$, $j\in{\mathbb N}$ of  (\ref{def1})-(\ref{def2}) accumulating at $+\infty$ satisfying
\begin{eqnarray*}
\qquad k^j(a_{max}, n_{min},D)\leq k^j_\epsilon<k^j(a_{min},n_{max}, D) \;&\mbox{if}&\; a_{min}>1, \;  0<n_{max}<1\\
\qquad k^j(a_{min},n_{max}, D)\leq k^j_\epsilon<k^j(a_{max},n_{min},D) \; &\mbox{if}&\; 0<a_{max}<1, \;  n_{min}>1
\end{eqnarray*}
where $k^j(a, n,D)$ denotes an eigenvalue of (\ref{def1})-(\ref{def2}) with $A_\epsilon=aI$ and $n_{\epsilon}=n$.
\item Assume that  $a_{min}>1$ and   $n_{min}>1$ or $0<a_{max}<1$ and  $0<n_{max}<1$. There exists finitely many real transmission eigenvalues $k^j_\epsilon$, $j=1\cdots p$ of  (\ref{def1})-(\ref{def2}) provided that $n_{max}$ is small enough. In addition they satisfy
\begin{eqnarray*}
\qquad 0< k^j_\epsilon<k^j(a_{min}/2, D) \;&\mbox{if}&\; a_{min}>1, \;  n_{min}>1\\
\qquad 0< k^j_\epsilon<k^j(a_{max}/2, D) \, &\mbox{if}&\, 0<a_{max}<1, \;  0<n_{max}<1
\end{eqnarray*}
where $k^j(a, D)$ denotes an  eigenvalue of (\ref{def1})-(\ref{def2}) with $A_\epsilon=aI$ and $n_{\epsilon}=1$.
\end{enumerate}
Here $j$ counts the eigenvalue in the sequence under consideration which may not necessarily be the $j$-th transmission eigenvalue. In particular the first transmission eigenvalue satisfies the above estimates.
\end{lemma}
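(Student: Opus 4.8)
The plan is to reduce, for each fixed spectral parameter $\tau=k^2>0$, the transmission eigenvalue problem \eqref{def1}--\eqref{def2} to a self-adjoint generalized eigenvalue problem, and then to exploit monotonicity of the resulting eigenvalue curves together with comparison against the constant-coefficient problem. Because here $A_\epsilon\neq I$ \emph{and} $n_\epsilon\neq 1$ simultaneously, the clean fourth-order scalar reduction for $u=w-v$ of \eqref{4thitp} (valid only when $A_\epsilon\equiv I$) is unavailable, so I would instead keep the system and work in the variational space $X(D)$. Following the $\mathbb{T}$-coercivity bookkeeping already used in the proof of Theorem \ref{imthm1}, one rewrites the eigenvalue problem as $\mathbb{A}_\tau U=\tau\,\mathbb{B}U$ on $X(D)$, where $\mathbb{A}_\tau$ is a family of bounded self-adjoint operators depending continuously (indeed real-analytically) on $\tau$ and coercive uniformly on compact $\tau$-intervals, and $\mathbb{B}$ is a fixed compact positive operator. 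Under the hypotheses of part (1) --- $a_{min}>1$ with $0<n_{max}<1$, or the reverse --- coercivity of $\mathbb{A}_\tau$ holds for all $\tau>0$; under the hypotheses of part (2) --- both contrasts of the same sign --- coercivity survives only on a bounded $\tau$-range and only when $n_{max}$ is small enough to absorb the indefinite $n$-terms, which is precisely where the smallness assumption enters.

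Next I would introduce the generalized eigenvalues $\lambda_1(\tau)\le\lambda_2(\tau)\le\cdots$ of $\mathbb{A}_\tau U=\lambda\,\mathbb{B}U$, described by the min-max principle, show by a Rayleigh-quotient argument that each $\lambda_j(\tau)$ is continuous and strictly monotone in $\tau$, and verify that $\tau$ is a real transmission eigenvalue exactly when $\lambda_j(\tau)=\tau$ for some $j$. In part (1) one checks that, as $\tau\to 0^+$, infinitely many curves $\lambda_j(\tau)$ lie above the diagonal $\lambda=\tau$, while each such curve is driven below the diagonal for $\tau$ large, since $\mathbb{A}_\tau$ eventually fails to dominate $\tau\mathbb{B}$ on larger and larger subspaces; continuity then produces an infinite sequence of crossings $k^j_\epsilon$ accumulating at $+\infty$. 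In part (2) coercivity of $\mathbb{A}_\tau$ is lost altogether once $\tau$ is large, so the same counting argument yields only finitely many crossings $k^1_\epsilon,\dots,k^p_\epsilon$, and only provided $n_{max}$ is small. This is exactly the abstract existence mechanism of \cite{cakginhad} and \cite{cakonikirsch}, whose hypotheses I would verify for $A_\epsilon,n_\epsilon$ uniformly in $\epsilon$.

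The two-sided bounds then follow from pointwise coefficient monotonicity of the Rayleigh quotient defining $\lambda_j(\tau)$: replacing $A_\epsilon(x)$ by the constant matrix $a_{max}I$ and $n_\epsilon(x)$ by $n_{min}$ (respectively $a_{min}I$ and $n_{max}$) moves the quotient monotonically, so the eigenvalue curves of the periodic medium are squeezed between those of the two constant-coefficient comparison problems; tracking the crossings through these inequalities gives $k^j(a_{max},n_{min},D)\le k^j_\epsilon<k^j(a_{min},n_{max},D)$ in part (1) and the analogous chain in part (2). The bound $0<k^j_\epsilon<k^j(a_{min}/2,D)$ comes from estimating $\mathbb{A}_\tau$ below by its $n\equiv 1$, $A=(a_{min}/2)I$ counterpart after the indefinite $n$-contribution has been absorbed, the factor $1/2$ being the margin this absorption consumes.

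The step I expect to be the main obstacle is the first one in the genuinely anisotropic case: producing a self-adjoint, $\tau$-analytic family $\mathbb{A}_\tau$ whose coercivity is uniform in $\epsilon$ requires the right choice of $\mathbb{T}$-operator (as in Theorem \ref{imthm1}) and careful tracking of which terms are sign-definite, and in part (2) the quantitative threshold on $n_{max}$ and the precise constant $1/2$ must be extracted from the coercivity estimate rather than merely asserted. These computations are carried out in \cite{cakonikirsch} (see also \cite{CC-book}), so in the write-up I would state the abstract existence theorem from there and reduce the lemma to checking its hypotheses, noting that the estimates \eqref{estA} ensure the comparison constants $a_{min},a_{max},n_{min},n_{max}$ are inherited by the homogenized coefficients, which is what makes these bounds useful for the eigenvalue convergence of Section 3.
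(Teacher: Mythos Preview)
Your proposal is essentially correct and matches the paper's approach: the paper's own proof simply cites \cite{cakonikirsch} for existence and notes that the two-sided bounds follow by the same Rayleigh-quotient monotonicity argument already sketched for Lemma~\ref{lem4.1} (i.e.\ as in Corollary~2.6 of \cite{cakginhad}). Your description of the mechanism --- an auxiliary self-adjoint generalized eigenvalue problem depending on $\tau$, continuous eigenvalue curves $\lambda_j(\tau)$, transmission eigenvalues as crossings $\lambda_j(\tau)=\tau$, and coefficient monotonicity of the Rayleigh quotient yielding the comparison with constant-coefficient media --- is exactly the abstract scheme underlying both cited references, and your remark that in part~(2) coercivity holds only on a bounded $\tau$-range (whence only finitely many crossings) is the right explanation for the finiteness claim.

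One technical point worth flagging: the $\mathbb{T}$-coercivity of Theorem~\ref{imthm1} establishes Fredholmness on $X(D)$ but does \emph{not} by itself deliver a self-adjoint $\mathbb{A}_\tau$, since composing with $\mathbb{T}$ destroys symmetry of the sesquilinear form. The actual reduction in \cite{cakonikirsch} uses a different variational setup --- in $H^1_0(D)$ for the difference $u=w-v$, with a symmetric form built from $(A-I)^{-1}$ and $(n-1)$ --- and it is that formulation which is genuinely self-adjoint and amenable to min--max. Since you ultimately propose to invoke \cite{cakonikirsch} directly and only verify its hypotheses (which is precisely what the paper does), this does not affect the validity of your plan, but the sentence suggesting that $\mathbb{T}$-coercivity on $X(D)$ produces the self-adjoint family should be dropped or rephrased.
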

\begin{proof}
The estimates follow by the same argument  as in the proof of Lemma \ref{lem4.1} combined with the existence proofs in \cite{cakonikirsch}. In  particular, the estimates can be obtained by modifying the proof of Theorem 2.6 and Theorem 2.10 in \cite{cakonikirsch} in a similar way as in the proof of Corollary 2.6 in  \cite{cakginhad}.
\end{proof}

\subsection{The case of $A_\eps \neq I$} We assume that  $A_{min}>1$ or $A_{max}<1$  in addition to (\ref{c0}) and (\ref{c1}) and let $k_{\epsilon}$ be one of the  transmission eigenvalues  described in Lemma \ref{lem4.1} and Lemma \ref{lem4.2}.  In particular  $\{k_\epsilon\}$ is bounded and hence there is a positive number $k\in{\mathbb R}$ such that $k_\epsilon\to k$ as $\epsilon \to 0$. Let $(w_\epsilon ,v_\epsilon)$  be a corresponding pair of eigenfunctions  normalized such that $|| w_\eps||_{L^2(D)}+|| v_\eps||_{L^2(D)}=1$. Notice from Section \ref{sec3.1} that the transmission eigenfunctions satisfy 
$$\mathcal{A}_\eps \left( (w_\eps , v_\eps); (\varphi_1,\varphi_2) \right)=0   \,\quad  \text{ for all } \, \,   (\varphi_1,\varphi_2) \in X(D)$$
where the sesquilinear form ${\mathcal A}_\epsilon(\cdot ; \cdot)$ is given by
\begin{eqnarray*}
\mathcal{A}_\eps \big( (w_\eps , v_\eps); (\varphi_1,\varphi_2) \big):=\int\limits_D A_\eps \grad w_\eps \cdot \grad \overline{\varphi}_1 -\grad v_\eps \cdot \grad \overline{\varphi}_2 -k^2_\eps(n_\eps w_\eps \overline{\varphi}_1- v_\eps \overline{\varphi}_2 )\, dx.
\end{eqnarray*}
Obviously if  $\mathbb{T}: X(D) \mapsto X(D)$ is a continuous bijection then we have that the  pair of the eigenfunction $(w_\epsilon ,v_\epsilon)$ satisfies
\begin{equation}\label{eigenf}
\mathcal{A}_\eps \big( (w_\eps , v_\eps); \mathbb{T}(w_\eps , v_\eps) \big)=0.
\end{equation}
We will use (\ref{eigenf}) to prove that the sequence $(w_\eps , v_\eps)$ is bounded in $X(D)$. To do so we must control the norm of the gradients of the functions in the sequence. Indeed, assuming that $A_{min}>1$ and letting $\mathbb{T}(w,v)=(w, -v+2w)$ gives that
\begin{eqnarray}\label{plaka}
\int\limits_D A_\eps \grad w_\eps \cdot \grad \overline{w}_\eps + |\grad v_\eps|^2 -2 \grad v_\eps \cdot \grad \overline{w}_\eps  \,dx= k^2_\eps \int\limits_D n_\eps |w_\eps|^2 + |v_\eps|^2 -2 v_\eps \overline{w}_\eps \, dx,
\end{eqnarray}
which by using Young's inequality gives that $||\grad w_\eps||^2_{L^2(D)}+||\grad v_\eps||^2_{L^2(D)}$ is bounded independently of $\epsilon>0$. Similarly in the case when $0<A_{max}<1$ we obtain the result using  $\mathbb{T}(w,v)=(w-2v, -v)$.

Therefore, in both cases  we have that $(w_\eps , v_\eps)$ is a bounded
sequence in $X(D)$. This implies that there is a  subsequence, still denoted by
$(w_\eps , v_\eps)$,  that converges weakly (strongly in $L^2(D) \times L^2(D)$
to some $(w,v)$ in $X(D)$).  The $L^2$-strong limit implies that $|| w
||_{L^2(D)}+|| v ||_{L^2(D)}=1$ hence $(w,v) \neq (0,0)$. Using similar
argument as at the beginning of Section \ref{sec3.1}  we have that $k$ is a
transmission eigenvalue, with  $(w,v)$  in $X(D)$  the corresponding transmission eigenfunctions, for the homogenized transmission eigenvalue problem
\begin{eqnarray}
\grad \cdot A_h \grad w +k^2 n_h w=0 \, \, \, & \textrm{ and }& \, \, \,  \Delta v + k^2 v=0  \, \, \textrm{ in } \,  D, \label{homogtevprob1} \\
w=v  &\textrm{ and }&  \frac{\partial w}{\partial \nu_{A_h}}=\frac{\partial v}{\partial \nu} \, \,  \textrm{ on }\,  \partial D. \label{homogtevprob2}
\end{eqnarray}
Hence we have proven the following result for the transmission eigenvalue problem.
\begin{theorem}\label{thmains}
Assume that  either $A_{min}>1$ or $0<A_{max}<1$ and let $k_\eps$ be a sequence of transmission eigenvalues for (\ref{def1})-(\ref{def2}) with  corresponding eigenfunctions $(w_\eps , v_\eps)$. Then, if $k_\epsilon$ is bounded with respect to $\epsilon$, then  there is a subsequence of $\{ (w_\eps , v_\eps) \, , k_\eps \} \in X(D) \times \R$ such that $(w_\eps , v_\eps) \weakc (w,v)$ in $X(D)$ \Big(strongly in $L^2(D) \times L^2(D)$\Big) and $ k_\eps \rightarrow k$ as $\eps \rightarrow 0$, where $\{ (w , v) \, , k\} \in X(D) \times \R$ is an eigenpair for \eqref{homogtevprob1}-\eqref{homogtevprob2}. 
\end{theorem}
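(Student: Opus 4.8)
The plan is to mimic, in the eigenvalue setting, the two-scale convergence argument already carried out for the interior transmission problem at the beginning of Section~\ref{sec3.1}, the new features being that the right-hand side vanishes but the spectral parameter $k_\eps$ itself varies with $\eps$. Normalize the eigenfunctions so that $\|w_\eps\|_{L^2(D)}+\|v_\eps\|_{L^2(D)}=1$; since the $k_\eps$ are real and, by hypothesis, bounded, a subsequence converges, $k_\eps\to k\in\R$. The first real task is a uniform $H^1(D)$ bound on the eigenfunctions. Testing the eigenfunction identity \eqref{eigenf} with $\mathbb{T}(w_\eps,v_\eps)$ --- taking $\mathbb{T}(w,v)=(w,-v+2w)$ when $A_{min}>1$ and $\mathbb{T}(w,v)=(w-2v,-v)$ when $0<A_{max}<1$ --- gives the identity \eqref{plaka} (resp.\ its analogue), whose left-hand side, after Young's inequality, is bounded below by a positive multiple of $\|\grad w_\eps\|_{L^2(D)}^2+\|\grad v_\eps\|_{L^2(D)}^2$ --- this is exactly the $\mathbb T$-coercivity computation from the proof of Theorem~\ref{imthm1}, and is where the sign condition on $A-I$ is used --- while the right-hand side is $O(k_\eps^2)\bigl(\|w_\eps\|_{L^2(D)}^2+\|v_\eps\|_{L^2(D)}^2\bigr)=O(k_\eps^2)$. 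Since $\{k_\eps\}$ is bounded, this yields $\|w_\eps\|_{H^1(D)}+\|v_\eps\|_{H^1(D)}\le C$ uniformly, so $(w_\eps,v_\eps)$ is bounded in $X(D)$.

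Next, pass to a further subsequence with $(w_\eps,v_\eps)\weakc(w,v)$ in $X(D)$ and, by the compact embedding $H^1(D)\hookrightarrow L^2(D)$, $(w_\eps,v_\eps)\to(w,v)$ strongly in $L^2(D)\times L^2(D)$; the strong $L^2$ limit together with the normalization forces $\|w\|_{L^2(D)}+\|v\|_{L^2(D)}=1$, so $(w,v)\neq(0,0)$. To identify the problem solved by $(w,v)$ I would repeat the steps of Section~\ref{sec3.1}: invoke \cite[Proposition~1.14]{allaire1} to get $w_1,v_1\in L^2(D,H^1_\#(Y))$ with $\grad w_\eps$, $\grad v_\eps$ two-scale converging to $\grad_x w+\grad_y w_1$, $\grad_x v+\grad_y v_1$; insert oscillating test functions $\varphi_i(x)=\psi_i(x)+\eps\theta_i(x)\phi_i(x/\eps)$ into the weak form of \eqref{def1}--\eqref{def2} and take the two-scale limit; choosing $\psi_1=\psi_2=0$ deduce $w_1(x,y)=-\vec\psi(y)\cdot\grad w(x)+\overline w_1(x)$ and $v_1(x,y)=\overline v_1(x)$ as in \eqref{corrector1}; choosing $\theta_1=\theta_2=0$ obtain that $(w,v)\in X(D)$ satisfies the homogenized weak formulation \eqref{homtev1} with $f=g=0$ and $k$ in place of $k_\eps$. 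The only point not already present in Section~\ref{sec3.1} is the passage to the limit in the zeroth-order terms $k_\eps^2 n_\eps w_\eps\overline\varphi_1$ and $k_\eps^2 v_\eps\overline\varphi_2$: here one combines $k_\eps\to k$ with the strong $L^2$ convergence of $w_\eps,v_\eps$ and the weak-$*$ convergence $n_\eps\weakc n_h$ in $L^\infty(D)$ (so $n_\eps w_\eps\weakc n_h w$ in $L^2(D)$), giving the limits $k^2 n_h w\overline\varphi_1$ and $k^2 v\overline\varphi_2$. Hence $(w,v)$ is a nontrivial solution of \eqref{homogtevprob1}--\eqref{homogtevprob2}, i.e.\ $k$ is a transmission eigenvalue of the homogenized problem with eigenfunctions $(w,v)$.

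I expect the uniform $H^1$ bound on the eigenfunctions to be the delicate step: unlike in Theorem~\ref{imthm1} there is no source term whose $L^2$ norm controls the solution, and hence no contradiction/compactness argument is available, so the bound must come directly from the sign condition on $A-I$ via the $\mathbb T$-coercivity identity, together with the \emph{a priori} boundedness of $\{k_\eps\}$ (supplied by Lemmas~\ref{lem4.1}--\ref{lem4.2} for the eigenvalue branches of interest, in particular the first transmission eigenvalue). A secondary subtlety is ensuring the limit is a genuine eigenpair: nontriviality of $(w,v)$ comes from compactness applied to the normalized sequence, and for the branches of Lemmas~\ref{lem4.1}--\ref{lem4.2} the uniform lower bounds there additionally guarantee $k>0$.
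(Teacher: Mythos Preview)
Your proposal is correct and follows essentially the same route as the paper: normalization in $L^2$, the $\mathbb{T}$-coercivity identity \eqref{eigenf}--\eqref{plaka} together with Young's inequality and the boundedness of $k_\eps$ to obtain the uniform $H^1$ bound, weak $X(D)$/strong $L^2$ compactness to get a nontrivial limit, and then the two-scale convergence machinery of Section~\ref{sec3.1} to identify the limiting problem. The only difference is that you spell out the passage to the limit in the zeroth-order terms (combining $k_\eps\to k$, strong $L^2$ convergence of $w_\eps$, and weak-$*$ convergence of $n_\eps$), which the paper subsumes under ``similar argument as at the beginning of Section~\ref{sec3.1}''.
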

%%%%%%%%%%%%%%%%%%%%%%%%%%%%%%%%
\subsection{The case of $A_\eps \equiv I$} In this case we assume that either $n_{min}>1$ or $0<n_{max}<1$. Let $k_\epsilon$ be an eigenvalue of  (\ref{def1})-(\ref{def2}) with corresponding eigenfunctions  $ (w_\eps , v_\eps)  \in L^2(D) \times L^2(D)$ such that $u_\eps =w_\eps-v_\eps \in H^2_0(D)$. As discussed in Section \ref{sec3.2},  $(w_\eps , v_\eps)$ are distributional solutions to:
\begin{eqnarray}
  \Delta v_\eps +k^2_\eps v_\eps=0 \, \, \textrm{ and } \, \, \Delta w_\eps +k^2_\eps n_\eps w_\eps=0 \, \, \textrm{ in } \,  D,  \label{case3pde}
\end{eqnarray}
whereas $u_\eps \in H^2_0(D)$ solves
\begin{eqnarray}
0  &=& \left(\Delta +k^2  n_\eps \right) \frac{1}{n_\eps-1} \left(\Delta +k^2  \right) u_\eps \, \, \textrm{ in } \,  D, \label{4thitpn}
\end{eqnarray}
which  in the variational form reads
\begin{equation} \label{vareig}
\int\limits_D  \frac{1}{ n_\eps-1} \left( \Delta u_\eps +k^2_\eps u_\eps \right) \left( \Delta \overline{\varphi} +k^2_\eps n_\eps \overline{\varphi} \right) \, dx =0 \, \, \, \textrm{for all}  \, \, \,   \varphi \in H^2_0(D).
\end{equation}
We recall that $w_\epsilon, v_\epsilon$ and $u_\epsilon$ are related by
\begin{eqnarray}
v_\eps &=& - \frac{1}{k^2 (n_\eps-1)} \left( \Delta u_\eps +k^2   n_\eps u_\eps \right) \, \, \textrm{ in } \,  D \label{dvn}\\
w_\eps &=& - \frac{1}{k^2 (n_\eps-1)} \left( \Delta u_\eps +k^2  u_\eps \right) \, \, \textrm{ in } \,  D. \label{dwn}
\end{eqnarray}
Without loss of generality we consider the first real transmission eigenvalue $k_\epsilon:=k^1_\epsilon$ and set $\tau_\epsilon:=(k_\epsilon)^2$. Since the  corresponding eigenfunctions are nontrivial we can take $||u_\eps ||_{H^1(D)}=1$, and in addition we have the existence of a limit point $\tau$ for the set $\{\tau_\eps\}_{\eps >0}$.  Similarly to the previous case we wish to show that the normalized sequence $u_\eps$ is  bounded in $H^2_0(D)$. We start with the case when $n_{min}>1$ and let  $\frac{1}{n_{max}-1}=\alpha >0$. Taking $\varphi=u_\epsilon$ in (\ref{vareig}) implies 
$$ \int\limits_D  \frac{1}{ n_\eps-1} \left| \Delta u_\eps \right|^2 +\frac{2 \tau_\eps }{n_\eps-1}\Re (\Delta u_\eps \overline{u_\eps}) +\frac{\tau_\eps^2 n_\eps }{n_\eps-1} |u_\eps|^2\, dx =0 $$
Therefore, making use of Lemma \ref{lem4.1} part 1,  we have that:
$$\left|  \int\limits_D \frac{2 \tau_\eps}{n_\eps-1} (\Delta u_\eps ) \overline{u_\eps} \, dx \right| \leq  \frac{2  \tau(n_{min} , D) }{n_{min}-1}  \left|\int\limits_D (\Delta u_\eps) \overline{u_\eps} \, dx \right|  \leq \frac{2  \tau(n_{min} , D) }{n_{min}-1}  \int\limits_D |\grad u_\eps|^2 \, dx.$$
Which gives that: 
\begin{eqnarray*}
\alpha  ||\Delta u_\eps||^2_{L^2(D)}   \leq  \frac{ \tau(n_{min} , D)^2 n_{max}}{n_{min}-1} || u_\eps ||^2_{L^2(D)} + \frac{2  \tau(n_{min} , D) }{n_{min}-1}  || \grad u_\eps ||^2_{L^2(D)}.
\end{eqnarray*}

Now since $|| u_\eps ||_{H^1(D)}=1$ and using that $|| \Delta \cdot||_{L^2(D)}$ is an equivalent norm on $H^2_0(D)$ we have that $u_\eps$ is a bounded sequence. By the construction of $(w_\eps , v_\eps)$ we have that this is a bounded sequence in $L^2(D) \times L^2(D)$. Note that a similar argument holds if $0<(n_{max}-1)<1$, by multiplying the variational form by $-1$. Now by similar argument as in the proof of Theorem \ref{th3.3} we can now conclude the following result.
\begin{theorem}\label{thmains2}
Assume that $A_\epsilon \equiv I$ for all $\epsilon>0$ and either $n_{min}>1$ or $n_{max}<1$, and furthermore let $k_\eps$ be a  transmission eigenvalue for (\ref{def1})-(\ref{def2})  with  corresponding eigenfunctions $(w_\eps , v_\eps)$. Then, if $k_\epsilon$ is bounded with respect to $\epsilon$, there is a subsequence of $\{ (w_\eps , v_\eps) \, , k_\eps \} \in (L^2(D) \times L^2(D)) \times \R_+$ such that $(w_\eps , v_\eps) \weakc (w,v)$  in $L^2(D) \times L^2(D)$ and $ k_\eps \rightarrow k$ as $\eps \rightarrow 0$, where $\{ (w , v) \, , k \} \in \left(L^2(D) \times L^2(D) \right) \times \R_+$ is an eigenpair corresponding to 
$$ \Delta v +k^2 v=0 \, \, \textrm{ and } \, \, \Delta w + k^2 n_h w=0 \, \, \textrm{ in } \,  D, \quad w-v\in H^2_0(D).$$
\end{theorem}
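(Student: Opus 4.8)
The plan is to mimic the proof of Theorem~\ref{th3.3}, taking the difference $u_\eps := w_\eps - v_\eps \in H^2_0(D)$ as the primary unknown, normalising it by $\|u_\eps\|_{H^1(D)} = 1$, and writing $\tau_\eps := k_\eps^2$. Since $\{k_\eps\}$ is assumed bounded and, by Lemma~\ref{lem4.1}, is bounded below by the fixed positive constant $k^1(n_{max},D)$ when $n_{min}>1$ (and by $k^1(n_{min},D)$ when $0<n_{max}<1$), we may pass to a subsequence along which $k_\eps \to k$ for some real $k>0$, equivalently $\tau_\eps \to \tau = k^2 > 0$. The $\eps$-uniform bound $\|u_\eps\|_{H^2(D)} \le C$ has already been obtained in the discussion preceding the statement: testing \eqref{vareig} with $\varphi = u_\eps$, bounding $\tfrac{1}{n_\eps-1}$ below, controlling $\int_D \tfrac{2\tau_\eps}{n_\eps-1}(\Delta u_\eps)\overline{u_\eps}\,dx$ by $\|\nabla u_\eps\|_{L^2(D)}^2$, and using that $\|\Delta\cdot\|_{L^2(D)}$ is an equivalent norm on $H^2_0(D)$. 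Relations \eqref{dvn}--\eqref{dwn} then show that $(w_\eps,v_\eps)$ is bounded in $L^2(D)\times L^2(D)$.

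Next I would extract subsequences with $u_\eps \weakc u$ in $H^2_0(D)$ and $(w_\eps,v_\eps)\weakc(w,v)$ in $L^2(D)\times L^2(D)$. Since $H^2_0(D)$ is weakly closed, $u\in H^2_0(D)$, and passing to the weak $L^2$ limit in $w_\eps - v_\eps = u_\eps$ gives $w - v = u \in H^2_0(D)$, which simultaneously encodes the homogeneous Cauchy data on $\partial D$. The key point is that the compact embedding $H^2_0(D)\hookrightarrow H^1(D)$ promotes the weak $H^2$ convergence to strong $H^1$ convergence, so that the normalisation is inherited by the limit: $\|u\|_{H^1(D)} = 1$, and therefore $u \neq 0$ and a fortiori $(w,v)\neq(0,0)$.

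To identify the limiting equations I would argue as in Theorem~\ref{th3.3}. Testing $\Delta v_\eps + \tau_\eps v_\eps = 0$ against $\varphi\in C_0^\infty(D)$ and passing to the limit gives $\Delta v + k^2 v = 0$ in the distributional sense; interior elliptic regularity then provides local $H^1$ bounds on $v_\eps$, hence, through an exhaustion of $D$ and a diagonal extraction, strong convergence $v_\eps \to v$ in $L^2_{loc}(D)$, and since $w_\eps = u_\eps + v_\eps$ also $w_\eps \to w$ in $L^2_{loc}(D)$. Combining the local strong convergence of $w_\eps$ with the weak-$*$ convergence $n_\eps \to n_h$ in $L^\infty(D)$ and $\tau_\eps\to\tau$, I can pass to the limit in $\int_D w_\eps(\Delta\varphi + \tau_\eps n_\eps\varphi)\,dx = 0$ to obtain $\Delta w + k^2 n_h w = 0$ distributionally; feeding the same convergences into $-\tau_\eps(n_\eps-1)w_\eps = \Delta u_\eps + \tau_\eps u_\eps$ (together with $u_\eps\weakc u$ in $H^2$) identifies $U = w-v = u$ as a solution of the homogenised fourth-order equation \eqref{homA=1.1}. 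Combined with $(w,v)\neq(0,0)$, this exhibits $k$ as a transmission eigenvalue of the homogenised problem with eigenpair $(w,v)$.

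The step I expect to be the main obstacle is guaranteeing the nontriviality of the limiting eigenpair. Normalising $w_\eps,v_\eps$ in $L^2(D)$, as was done in the case $A_\eps\neq I$, is not available here because one controls only $u_\eps$, and not $w_\eps$ or $v_\eps$, in an $\eps$-uniform norm stronger than $L^2$; one must therefore normalise $u_\eps$ in $H^1(D)$, a norm in which the $H^2_0(D)$-bounded sequence is precompact by Rellich, so that $\|u\|_{H^1(D)}=1$ survives the passage to the limit. A secondary technicality is that interior elliptic regularity only yields the limit equations on compact subsets of $D$; this is harmless since the boundary conditions come separately from the membership $u=w-v\in H^2_0(D)$.
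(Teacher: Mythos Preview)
Your proposal is correct and follows essentially the same route as the paper: normalise $u_\eps=w_\eps-v_\eps$ in $H^1(D)$, use the variational identity \eqref{vareig} with $\varphi=u_\eps$ together with the uniform upper bound on $\tau_\eps$ from Lemma~\ref{lem4.1} to get an $\eps$-uniform $H^2_0(D)$ bound, deduce $L^2$ bounds on $(w_\eps,v_\eps)$ via \eqref{dvn}--\eqref{dwn}, and then pass to the limit as in Theorem~\ref{th3.3}. The paper's own proof simply records the $H^2$ bound and then defers to ``a similar argument as in the proof of Theorem~\ref{th3.3}''; your write-up spells out two points the paper leaves implicit, namely the use of the compact embedding $H^2_0(D)\hookrightarrow H^1(D)$ to secure $\|u\|_{H^1(D)}=1$ and hence nontriviality of the limit eigenpair, and the appeal to Lemma~\ref{lem4.1} for a uniform \emph{lower} bound on $k_\eps$ ensuring $k>0$.
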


\bigskip
The proofs of both Theorem \ref{thmains} and Theorem \ref{thmains2} simply depend on the boundedness of  the sequence of any real transmission eigenvalue in terms of $\epsilon$, therefore the proofs hold for all the eigenvalues that satisfy bounds stated in Lemma \ref{lem4.1} and Lemma \ref{lem4.2}.
%\marginpar{\color{red}To modify according to the modifications of Lemma \ref{lem4.1} and Lemma \ref{lem4.2}}
\begin{remark} 
{\em  The transmission eigenvalues of the limiting problem \eqref{homogtevprob1}-\eqref{homogtevprob2} satisfy the same type of estimates as in  Lemma \ref{lem4.1} and Lemma \ref{lem4.2}. Furthermore, from the proof of Theorem \ref{thmains} and Theorem \ref{thmains2} one can see that the limit $k$ of the sequence $\{k_\epsilon\}$, where each $k_\epsilon$ is the first transmission eigenvalue of (\ref{def1})-(\ref{def2}), is the first transmission eigenvalue of \eqref{homogtevprob1}-\eqref{homogtevprob2}.
}
\end{remark}
\section{Numerical Experiments}
%%%%%%%%%%%%%%%%%%%%%%%%%%%% The SBV BVP 
We start  this section with a preliminary numerical investigation on the convergence of the first transmission eigenvalue. To this end, we fix an $A_\eps$ and $n_\eps$ and investigate the behavior of the first transmission eigenvalue $k_1(\eps)$ on $\epsilon$. More specifically, we investigate the convergence rate of $k_1(\eps)$ to the first eigenvalue $k_h$ corresponding to the homogenized problem. The first transmission eigenvalue for the periodic media and homogenized problem is computed using a mixed finite element method with an eigenvalue-searching technique described in \cite{sun} and \cite{sun1}. In addition, we show numerical examples  of determining  the first few real transmission eigenvalues from the far field scattering data. This section is concluded with some examples demonstrating that the first real transmission eigenvalue  provides  information about the effective material properties $A_h$ and $n_h$ of the periodic media.  
\subsection{Numerical Tests for the Order of Convergence}
We consider the case where the domain $D=B_R$ with $R=2$ and for the first example assume that the periodic media is isotropic, i.e. $A_\epsilon =I$,  with refractive index
$$n_\eps= \sin^2(2\pi x_1/\eps)+\cos ^2(2\pi x_2/\eps)+ 2.$$
Obviously  $n_h=3$. If the domain is a ball of radius two in $\R^2$ separation of variables gives that the roots of 
$$d_0(k)=\text{J}_0\left(2k \sqrt{n_h}\right)\text{J}_1(2k)-\sqrt{n_h}\text{J}_1\left(2k \sqrt{n_h}\right)\text{J}_0(2k)$$
are transmission eigenvalues. Using the secant method we see that $k_h \approx 2.0820$.  The values of the first transmission eigenvalue for the periodic media for different values of $\epsilon$ are shown in Table \ref{tt1}.\\
\begin{table}[h!]
 % title of Table
\centering  % used for centering table
\begin{tabular}{c c c c c c c c c c } % centered columns (4 columns)
\hline\hline                        %inserts double horizontal lines
$\eps$    &  1/3 & 1/4 & 1/5 & 1/6 & 1/7 \\ [0.5ex] % inserts table 
%heading
\hline                  % inserts single horizontal line
 $k_1(\eps)$ &2.0842 &2.0834 & 2.0829 & 2.0828 & 2.0824 \\  [1ex]  % [1ex] adds vertical space          
\hline %inserts single line
\end{tabular}
\caption{First TEV for various $\eps$ with $A_\eps = I$ and $n_\eps \neq 1$ }\label{tt1}
\end{table}

To find the convergence rate we assume that the error satisfies that 
$$|k_1(\eps)-k_h|=C \eps^p \, \, \text{ which gives } \, \, \log \big( |k_1(\eps)-k_h|  \big) = \log(C) + p \log( \eps )$$
for some constant $C$ independent of $\eps$. Using the $\texttt{polyfit}$ command in Matlab we can find a $p$ that approximately satisfies the above equality. The calculations give that in this case $p=2.1486$ (see Figure \ref{ff1}). 
\begin{figure}[H]
\begin{center}
\includegraphics[scale=0.35]{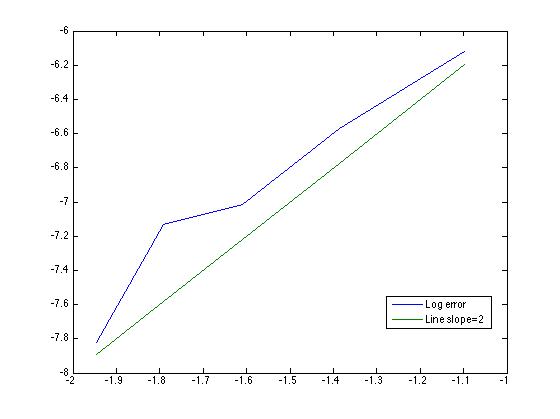}\\
\caption{Here is a $\texttt{Log-Log}$ plot that compares the $\log \big|k_1(\eps)-k_h \big|$ to the line with slope 2. }\label{ff1}
\end{center}
\end{figure}
In the next example we keep the same domain $D$ and take the periodic constitutive parameters of the media 
\begin{equation}
n_\eps= \sin^2(2\pi x_1/\eps)+\cos ^2(2\pi x_2/\eps)+ 2
\end{equation}
and
\begin{equation}\label{aep}
 A_\eps=\frac{1}{3} \begin{pmatrix}  \sin^2(2\pi x_2/\eps)+1 & 0 \\ 0 &  \cos^2(2\pi x_1/\eps)+1 \end{pmatrix}.
 \end{equation}
Notice that $\grad_y \cdot A e_i=0$ which gives that $A_h=\frac{1}{2} I$ and $n_h=3$. In this case the first zero of 
$$d_0(k)=\text{J}_0\left(2k \sqrt{\frac{n_h}{A_h}}\right)\text{J}_1(2k)-\sqrt{n_h A_h}\text{J}_1\left(2k \sqrt{\frac{n_h}{A_h}}\right)\text{J}_0(2k)$$
is the first  transmission eigenvalue $k_h^1$ for the homogenized problem which turns out to be $k^1_h=1.0582$. Similarly we use $\texttt{polyfit}$ in Matlab to find a $p$ such that $ \log \big( |k_1(\eps)-k_h|  \big) = \log(C) + p \log( \eps )$. In this case we calculate that $p=1.4421$.  The results are shown in Table \ref{tt5} and Figure \ref{ff5}.

\begin{table}[h!]
% title of Table
\centering  % used for centering table
\begin{tabular}{c c c c c c c} % centered columns (4 columns)
\hline\hline                        %inserts double horizontal lines
$\eps$    &  1 & 1/2 & 1/3 & 1/4 & 1/5 & 1/6 \\ [0.5ex] % inserts table 
%heading
\hline                  % inserts single horizontal line
 $k_1(\eps)$ & 1.0592 & 1.0591 & 1.0587 & 1.0586 & 1.0584 & 1.0583 \\  [1ex]  % [1ex] adds vertical space          
\hline %inserts single line
\end{tabular}
\caption{First TEV for various $\eps$ with $A_\eps \neq I$ and $n_\eps \neq 1$ } \label{tt5}
\end{table}
\begin{figure}[H]
\begin{center}
\includegraphics[scale=0.35]{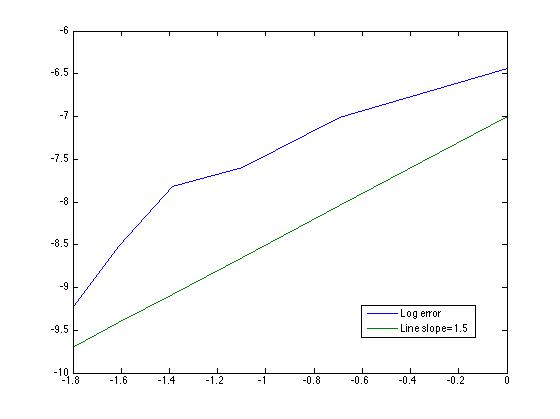}\\
\caption{Here is a $\texttt{Log-Log}$ plot that compares the $\log \big|k_1(\eps)-k_h \big|$ to the line with slope  } \label{ff5}
\end{center}
\end{figure}
In these two examples the convergence rate seems to be better than of order $\epsilon$. Notice that the boundary correction in these both cases does not appear since there is no boundary correction if $A=I$ and in the second example we have $\grad_y \cdot A_\eps e_i=0 \Longrightarrow \psi(y)=0$ which yield no boundary correction (this will become clear in the second part of this study but for the case of Dirichlet and Neumann conditions see \cite{shari1} and \cite{shari2}, respectively). We now wish to investigate the numerical convergence rate when $\psi(y) \neq 0$.  Hence take 
\begin{equation}\label{nep}
n_\eps= \sin^2(2 \pi x_1/\eps)+2
\end{equation}
  and  $\tilde A_\eps=T A_\eps T^{\top}$ where $A_\epsilon$ is given by (\ref{aep}) and $T$ is the matrix representing clockwise rotation by $1$ radian. We now compute the first transmission eigenvalue
with coefficients $n_\epsilon$ and $\tilde A_\eps$. Since now $\psi(y) \neq 0$,
we cannot compute analytically $A_h$ (one need to solve  the cell problem
numerically in order to compute $A_h$) and hence we do not have a value for the
first transmission eigenvalue of the homogenized problem. In this case, in
order to obtain an idea about the convergence order of the first
transmission eigenvalue we define the relative error as:
%\marginpar{\color{red} indicate which value of $\eps$ is used to evaluate the rate
%of convergence in Table 3}
$$\text{R.E.}= \frac{|k_1(\eps)-k_1(\eps/2)|}{k_1(\eps/2)}$$
and find the convergence rate for the relative error is a similar manner as
discussed above. The Table \ref{tt7} and Figure \ref{ff7} show  the computed
first  transmission eigenvalue for various epsilon in the square $D:=[0,
2]\times [0, 2]$ and the circular domain $D:=B_R$ of radius $R=1$.
\begin{table}[h!]
 % title of Table
\centering  % used for centering table
\begin{tabular}{c c c c c c c} % centered columns (4 columns)
\hline\hline                        %inserts double horizontal lines
$\eps$    &  1 & 1/2 & 1/4 & 1/8 & \vspace{0.1in}  | &Convergence Rate \\ [0.5ex] % inserts table 
%heading
\hline                  % inserts single horizontal line
 Circle $k_1(\eps)$ & 2.460 & 2.453 & 2.472 & 2.518 & \vspace{0.1in} | & 1.32 \\          
\hline %inserts single line
 Square $k_1(\eps)$ & 2.201 & 2.213 & 2.230 & 2.273 &  \vspace{0.1in} | & 0.917 \\  [1ex]  % [1ex] adds vertical space
 \hline %inserts single line          
\end{tabular}
\caption{First TEV for various $\eps$ shown in the first row corresponding to $\tilde A_\eps$ and $n_\eps$. Last column shows the convergence rate.}\label{tt7}
\end{table}
\begin{figure}[H]
\begin{center}
\includegraphics[scale=0.32]{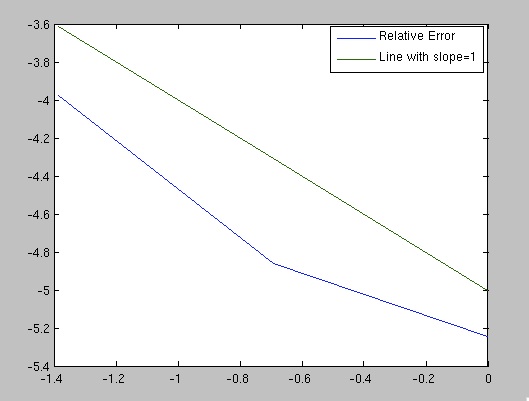}\includegraphics[scale=0.3145]{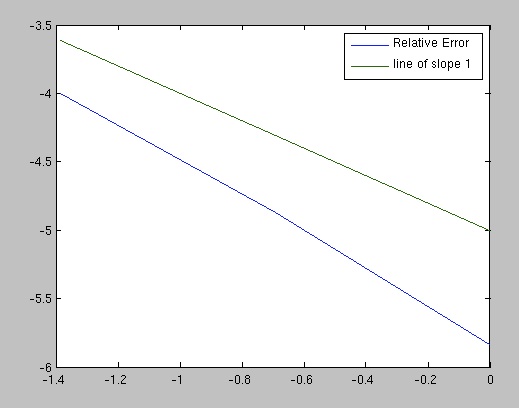}\\
\caption{Convergence graph for relative error when $\psi(y) \neq 0$ compare to the line with slope one. 
On the left we have the $\texttt{Log-Log}$ plot for the square and on the right for the disk.}\label{ff7}
\end{center}
\end{figure}
The above results seem to suggest that the relative error is of order $\eps$. In this case the boundary corrector is non-zero which explain this order of convergence.
\subsection{Transmission Eigenvalues and the Determination of  Effective Material Properties}
For the given inhomogeneous media, the corresponding transmission eigenvalues are closely related to the so-called non-scattering frequencies, i.e.  the values of $k$  for which there exists an incident wave doesn't scatter \cite{corner}, \cite{CakHad2}. The  scattering problem associated with our  transmission eigenvalue problem in ${\mathbb R}^2$ is given by
\begin{eqnarray*}
\grad \cdot A(x/\eps) \grad w_\eps +k^2 n(x/ \eps) w_\eps=0 \,  &\textrm{ in }& \,  D\\
\Delta u^s_\eps+k^2 u^s_\eps=0  &\textrm{ in }&  \R^2 \setminus \overline{D} \\
 w_\eps-u^s_\eps=u^i \, \, \, \textrm{ and }\, \, \, \frac{\partial  w_\eps}{\partial \nu_{A_\eps}}- \frac{\partial u^s_\eps}{\partial \nu} =\frac{\partial u^i}{\partial \nu} &\textrm{ on }& \partial D \\
\lim\limits_{r \mapsto \infty} \sqrt{r}\left( \frac{\partial u^s_\eps}{\partial r} -iku^s_\eps \right)=0 
\end{eqnarray*}
The asymptotic behavior of $u^s_\eps(r,\theta)$ can be shown to be \cite{CC-book}
\begin{equation*}
u^s_\eps(r,\theta)=\frac{e^{ikr}}{\sqrt{r}}u^{\infty}_\eps(\theta,\phi) + \mathcal{O} \left( r^{-3/2} \right) \; \textrm{  as  } \;  r \to \infty. 
\end{equation*}
where the function $u^{\infty}_\eps$ is called the far field pattern of the scattering problem with incident direction $\phi$ and observation angle $\theta$. 
Recall that the far field operator $F_\eps: L^2(0, 2\pi) \mapsto  L^2(0, 2\pi)$ is defined by
$$(F_\eps g)(\theta):=\int\limits_0^{2\pi} u^{\infty}_\eps(\theta,\phi) g(\phi) \, d\phi .$$
It has been shown that the transmission eigenvalues can be determine form a knowledge of the far field operator $F_\epsilon$ \cite{blow} and \cite{kirsch}. Now we would like to investigate how the first transmission eigenvalue determined from the far field operator depends on the parameter $\epsilon$. Here  to find the transmission eigenvalues from the far field data, we follow the approach in \cite{blow}. To this end, let $\Phi_{\infty}(\cdot,\cdot)$ be the  far field pattern for the fundamental solution to the Helmholtz equation. If $g_{z,\delta}$ is the Tikhonov  regularized solution of the far field equation, i.e.  the unique minimizer of the functional: 
$$\|F_\eps g-\Phi_{\infty}(\cdot, z)\|^2_{L^2(0,\,2\pi)}+\alpha \|g\|^2_{L^2(0,\,2\pi)}$$
with the regularization parameter $\alpha:=\alpha(\delta)\to 0$ as the noise level $\delta\to 0$, then at a transmission eigenvalue $\|v_{g_{z,\delta}}\|_{L^2(D)}\to \infty$ as $\delta\to 0$ for almost every $z\in D$, whereas otherwise bounded, where $v_g(x):=\int_{0}^{2\pi}g(\phi)e^{ik(x_1\cos\phi+x_2\sin\phi)}\,d\phi$. To compute the simulated data we use a FEM method to approximate the far field pattern corresponding to the scattering problem. Using the approximated $u^{\infty}_\eps$  we then solve: $F_\eps g=\Phi_{\infty}(\cdot,z)$ for 25 random values of $z\in D$ where the regularization parameter is chosen based on Morozov's discrepancy principle. The transmission eigenvalues will appear as spikes in the plot  of $||g_z||_{L^2[0,2\pi]}$ versus $k$. In our example we choose the domain $D:=B_R$ to be the ball of radius $R=1$ and the material properties $n_\epsilon$ given by (\ref{nep}) and $A_\epsilon$ given by (\ref{aep}). The effective material properties are $A_h=\frac{1}{2} I$ and $n_h=\frac{3}{2}$ and the corresponding first transmission eigenvalue is $k^1_h=2.5340$.  The computed transmission eigenvalue for this configuration for the  choices of $\epsilon =1$ and $\epsilon=0.1$ are shown in Figure \ref{ff99}
\begin{figure}[H]
\begin{center}
\includegraphics[scale=0.32]{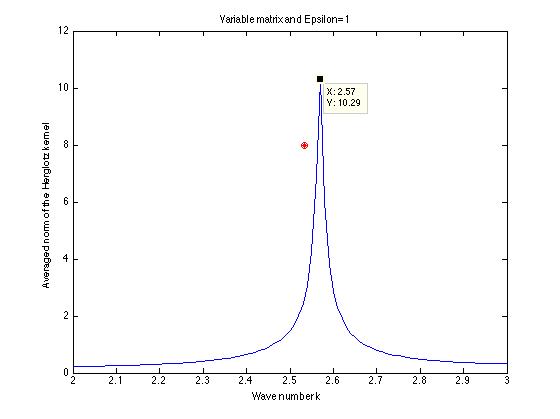}\includegraphics[scale=0.32]{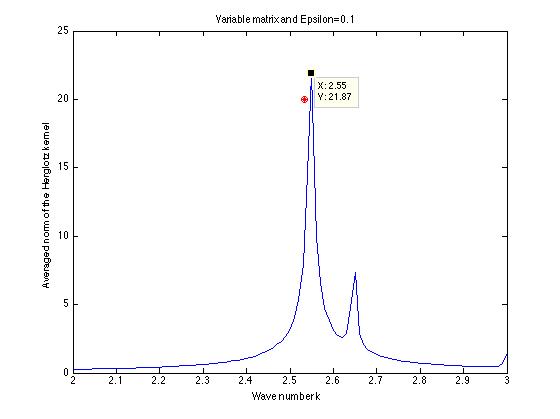}\\
\caption{On the left $\eps=1$ and on the right is $\eps=0.1$. The red dot indicates $k_h^1$ whereas the pick indicates $k_\epsilon^1$.} \label{ff99}
\end{center}
\end{figure}

The measured first transmission eigenvalue can be used to obtain information about the effective material properties $A_h$ and $n_h$. If $A_\epsilon=I$, it is known that $k_h^1$ uniquely determines $n_h$ and also the transmission eigenvalue depend continuously on $n_h$ \cite{drossos,coss,haddex}. From the scattering data we measure $k_\epsilon^1$ which for epsilon small enough is close to $k_h^1$.
Hence having available $k_\epsilon^1$ we find a constant $n$ such that the first transmission eigenvalue of the homogeneous media with refractive index $n$ has $k_\epsilon^1$ as the first transmission eigenvalue. Then by continuity this constant $n$ is close to $n_h$.  In Table \ref{tt99} we show the calculations for the ball of radius $D$, $A_\epsilon=I$ and $n_\eps=n(x / \eps)= \sin^2(2 \pi x_1/\eps)+2$. 
\begin{table}[H]
 % title of Table
\centering  % used for centering table
\begin{tabular}{c c c c  } % centered columns (4 columns)
\hline\hline                        %inserts double horizontal lines
$\eps$    &  $k_{\eps,1}$ & $n_h$ & reconstructed $n_h$   \\ [0.5ex] % inserts table 
%heading
\hline                  % inserts single horizontal line
 0.1 & 5.046 & 2.5 & 2.5188  \\ [1ex]  % [1ex] adds vertical space
 \hline %inserts single line          
\end{tabular}
\caption{Reconstruction of $n_h$}\label{tt99}
\end{table}

Similarly, we can obtain information about the effective constant matrix $A_h$ \cite{ccm09}, \cite{9}. In particular, in the case when $n_\epsilon=1$, from the first transmission eigenvalue $k_h^1$ we can determine a constant $a$  which is in the middle of the smallest and the largest eigenvalues (in fact earlier numerical example suggest that this constant is roughly the arithmetic average of the eigenvalues of $A_h$). As an example we again consider the ball $D:=B_R$ of radius $R=1$, $n_\epsilon=1$ and  $A_\epsilon$ given by (\ref{aep}). Then having the measured $k^1_{\eps}$, we find the constant $a$ such that the first eigenvalue of the homogeneous media with $A=aI$ and $n=1$ is equal to  $k^1_{\eps}$. The calculation are shown in  Table \ref{tt88}.
\begin{table}[H]
% title of Table
\centering  % used for centering table
\begin{tabular}{c c c c  } % centered columns (4 columns)
\hline\hline                        %inserts double horizontal lines
$\eps$    &  $k^1_{\eps}$ & $A_h$ & reconstructed $A_h$   \\ [0.5ex] % inserts table 
%heading
\hline                  % inserts single horizontal line
 0.1 & 7.349 & 0.5$I$ & 0.4851$I$  \\ [1ex]  % [1ex] adds vertical space
 \hline %inserts single line          
\end{tabular}
\caption{Reconstruction of affective material property from FFE in unit disk } \label{tt88}
\end{table}
In the above both examples we see that the measured first transmission eigenvalue corresponding to the periodic highly oscillating media  can accurately determine the effective isotropic material properties $A_h=a_hI$ or $n_h$.
Next we consider an example where $A_h$ is constant matrix. We take the ball $D:=B_R$ of radius $R=1$ and $n_\epsilon=1$  and $\tilde A_\eps=T A_\eps T^{\top}$ where $A_\epsilon$ is given by (\ref{aep}) and $T$ is the matrix representing clockwise rotation by $1$ radian. In this case it becomes non-trivial to compute $A_h$ (one needs to solve the cell PDE problem). However the constant $a$ found as in the above example is in between (roughly the average) of the smallest and the largest eigenvalue of $A_h$.  The results are shown in Table \ref{tt44}
\begin{table}[H]
 % title of Table
\centering  % used for centering table
\begin{tabular}{c c c  } % centered columns (4 columns)
\hline\hline                        %inserts double horizontal lines
$\eps$    &  $k_{\eps,1}$  & reconstructed $a$   \\ [0.5ex] % inserts table 
%heading
\hline                  % inserts single horizontal line
 0.1 & 7.5499 & 0.4921$I$  \\ [1ex]  % [1ex] adds vertical space
 \hline %inserts single line          
\end{tabular}
\caption{Reconstruction for the unit disk and  $A_\epsilon$ given by  (\ref{aep})}\label{tt44}
\end{table}
Furthermore, if both  $A_\eps \neq I$ and $n \neq 1$ we use a similar method as the above  to obtain information about $A_h/n_h$ \cite{cakonikirsch}. Here we look for a constant  $\alpha$ such that the first eigenvalue of 
\begin{eqnarray*}
\Delta w +\alpha k^2 w=0 \, \, \, & \textrm{ and }& \, \, \,   \Delta v + k^2v=0 \, \, \, \,  \textrm{ in } \, \, \, \,  D  \\
w=v \, \, \,  &\textrm{ and }& \, \, \,  \frac{\partial w}{\partial \nu}=\frac{\partial v}{\partial \nu} \, \, \, \textrm{ on } \, \, \,\partial D 
\end{eqnarray*}
coincide with $k^1_{\eps}$ (note that here we incorrectly drop the jump in the normal derivative), where we take $n_\epsilon$ given by (\ref{nep}) and $A_\epsilon$ given by (\ref{aep})
giving that the ratio $\frac{n_h}{a_h}=5$.  The reconstruction is shown in Table \ref{tt33}.
\begin{table}[H]
 % title of Table
\centering  % used for centering table
\begin{tabular}{c c c  } % centered columns (4 columns)
\hline\hline                        %inserts double horizontal lines
$\eps$    &  $k_{\eps,1}$  & reconstructed $\frac{n_h}{a_h}$   \\ [0.5ex] % inserts table 
%heading
\hline                  % inserts single horizontal line
 0.1 & 2.5415 & 4.788  \\ [1ex]  % [1ex] adds vertical space
 \hline %inserts single line          
\end{tabular}
\caption{Reconstruction of the ratio $\frac{n_h}{a_h}=5$ of effective material property for the  unit disk $D$}\label{tt33}
\end{table}
 
In all the examples so far we have considered smooth coefficients $A_\epsilon$
and $n_\epsilon$.  Hence, our next example concerns a checker board patterned
media where the coefficients take different values in the white and black
squares. Again here the scaled period for the coefficients is $Y=[0,1]^2$. The white and black squares are assumed to cover the same area in a unit cell. See Figure \ref{figu} for the definition of the coefficients. In this case we have that $n_h= 7/2$ and $A_h$ is shown in \cite{bojan} to be a scalar matrix, i.e. $A_h=a_hI$ where $a_h$ can be computed numerically.
\begin{figure}[H]
\begin{center}
\includegraphics[scale=0.35]{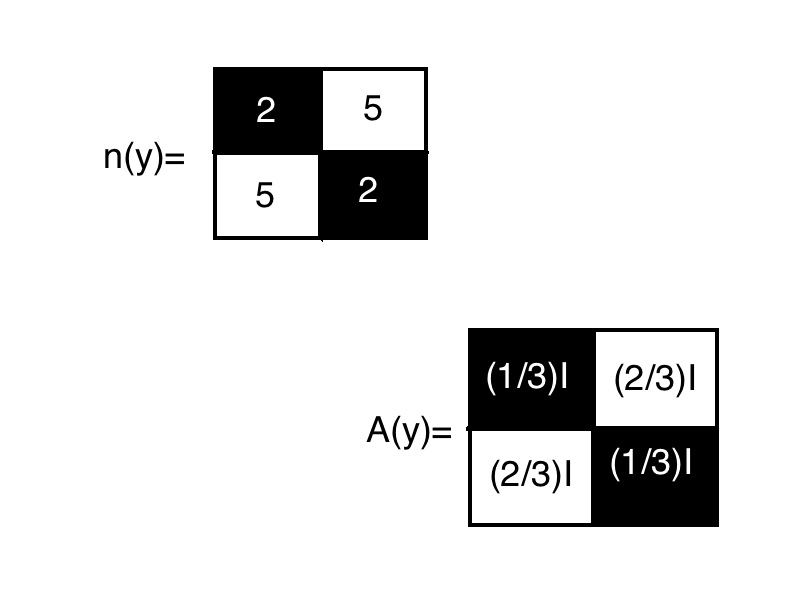}\\
\caption{Definition of Checker board coefficients. }\label{figu}
\end{center}
\end{figure}
See Table \ref{tt11} for a comparison between the first transmission eigenvalue of  the homogenized media and periodic media.
\begin{table}[H]
 % title of Table
\centering  % used for centering table
\begin{tabular}{c c c c c c c c c} % centered columns (4 columns)
\hline\hline                        %inserts double horizontal lines
$k_{1}(n(y))$&$k_{1}(n_h)$ & \vline & $k_{1}(A(y))$&$k_{1}(A_h)$& \vline& $k_{1}(n(y), A(y))$&$k_{1}(n_h, A_h)$   \\ [0.5ex] % inserts table 
%heading
\hline                  % inserts single horizontal line
 1.0930 & 1.0757 & \vline & 1.9027 & 1.896 &\vline & 0.7673 & 0.7139  \\ [1ex]  % [1ex] adds vertical space
 \hline %inserts single line          
\end{tabular}
\caption{ Media with checkerboard pattern in $[-3,3]^2$}\label{tt11}
\end{table}
Next we use the first transmission eigenvalue for the actual media to determine the effective material properties. The result are shown in Table \ref{ball}
\begin{table}[H]
 % title of Table
\centering  % used for centering table
\begin{tabular}{c c c c c c c c c} % centered columns (4 columns)
\hline\hline                        %inserts double horizontal lines
$A(y)=I$, $n(y)$ & \vline & reconstructed $n_h=3.4123$ (exact $n_h=3.5$) \\ [0.5ex] % inserts table 
%heading
\hline                  % inserts single horizontal line
$A(y)$, $n(y)=1$ & \vline & reconstructed $a_h=0.4472$  \\ [1ex]  % [1ex] adds vertical space
 \hline %inserts single line          
 $A(y)$, $n(y)$ & \vline & reconstructed $n_h/a_h=7.4704$  which gives $a_h=0.4685$\\ [1ex]  % [1ex] adds vertical space
 \hline %inserts single line    
\end{tabular}
\caption{Reconstructed of effective material properties for the checkerboard}\label{ball}
\end{table}

Lastly consider the case of a media with periodically spaced voids (subregions with $n_\eps=1$ and $A_\eps=I$). Our  analysis does not cover this  type of material property (see \cite{harris-thesis} for the case when $D$ is a union of cells) but nevertheless we consider an example of this type (The existence of real transmission eigenvalues for media with voids is proven in \cite{cch2010,harris}). In particular, we consider an example of isotropic media with  refractive index $A(y)=I$ and 
$$n(y) =\left\{ \begin{array}{rcl} 1 & \mbox{if} & (y_1-0.5)^2+(y_2-0.5)^2 < 0.25^2 \\ 5 & \mbox{if} &(y_1-0.5)^2+(y_2-0.5)^2 \geq 0.25^2  \end{array}\right.$$
 which gives that $n_h=5-\frac{\pi}{4}$, and  an example of anisotropic case with the same $n(y)$ and 
$$A(y) =\left\{ \begin{array}{rcl} I & \mbox{if} & (y_1-0.5)^2+(y_2-0.5)^2 < 0.25^2 \\ 0.5I & \mbox{if} &(y_1-0.5)^2+(y_2-0.5)^2 \geq 0.25^2  \end{array}\right.$$
where the period is $Y=[0,1]^2$ and  the is domain $D=[-3,3]^2$. See Table \ref{tlas} for the comparison of the first transmission eigenvalue for the homogenized media and the actual periodic media.
\begin{table}[H]
 % title of Table
\centering  % used for centering table
\begin{tabular}{c c c c c } % centered columns (4 columns)
\hline\hline                        %inserts double horizontal lines
$k_{1}(n(y))$    &  $k_{1}(n_h)$  & \vline & $k_{1}(n(y), A(y))$ & $k_{1}(n_h, A_h)$   \\ [0.5ex] % inserts table 
%heading
\hline                  % inserts single horizontal line
 0.8745 & 0.8781 & \vline & 0.7599 & 0.7231  \\ [1ex]  % [1ex] adds vertical space
 \hline %inserts single line          
\end{tabular}
\caption{ Media with periodic voids in $[-3,3]^2$}\label{tlas}
\end{table}
In Table \ref{ball2} we show reconstructed effective material properties based on the first transmission eigenvalue. Note that $a_h$ is between the smallest and the largest eigenvalues of $A_h$.  
\begin{table}[H]
 % title of Table
\centering  % used for centering table
\begin{tabular}{c c c c c c c c c} % centered columns (4 columns)
\hline\hline                        %inserts double horizontal lines
$A(y)=I$, $n(y)$ & \vline & reconstructed $n_h=4.2678$ (exact $n_h=4.2146$) \\ [0.5ex] % inserts table 
%heading
\hline                  % inserts single horizontal line      
 $A(y)$, $n(y)$ & \vline & reconstructed $n_h/a_h=5.0550$  which gives $a_h=0.8337$\\ [1ex]  % [1ex] adds vertical space
 \hline %inserts single line    
\end{tabular}
\caption{Reconstructed effective material properties for the checkerboard}\label{ball2}
\end{table}
%%%%%%%%%%%%%%%%%%% Analysis of Convergence
\bibliographystyle{plain}

\medskip

%\quad Received for publication

\medskip
 {\it E-mail address: }cakoni@math.udel.edu\\
 \indent{\it E-mail address: } haddar@cmap.polytechnique.fr\\
  \indent{\it E-mail address: } iharris@udel.edu\\
\end{document}